\DeclareMathOperator{\Ker}{Ker}
\DeclareMathOperator{\osc}{osc}
\DeclareMathOperator{\Span}{span}
\newtheorem{theorem}{Theorem}[section]
\newtheorem{lemma}[theorem]{Lemma}
\newtheorem{corollary}[theorem]{Corollary}
\newtheorem{proposition}[theorem]{Proposition}
\theoremstyle{definition}
\newtheorem{definition}[theorem]{Definition}
\newtheorem{notation}[theorem]{Notation}
\newtheorem{example}[theorem]{Example}
\newtheorem{question}[theorem]{Question}
\theoremstyle{remark}
\newtheorem*{remark}{Remark}
\title{Fully closed maps and LUR renormability}
\author{Todor Manev
\thanks{This study is financed by the European Union-NextGenerationEU, through the National Recovery and Resilience Plan of the Republic of Bulgaria, project № BG-RRP-2.004-0008-C01.}}
\date{\vspace{-10pt}}
\begin{document}

\maketitle

\begin{abstract}
	We show that the space of continuous functions over a compact space $X$ admits an equivalent pointwise-lowersemicontinuous locally uniformly rotund norm whenever $X$ admits a fully closed map $\pi$ onto a compact $Y$ such that $C(Y)$ and the spaces $C(\pi^{-1}(y))$ all admit such norms. A map is called fully closed if the intersection of the images of any two closed disjoint sets is finite. As a main corollary we obtain that $C(X)$ is LUR renormable whenever $X$ is a Fedorchuk compact of finite spectral height.
\end{abstract}

\setlength{\abovedisplayskip}{5pt}
\setlength{\belowdisplayskip}{5pt}

\section{Introduction} \label{sec_intro}

Recall that the norm of a Banach space $E$ is called locally uniformly rotund (LUR for short) if for any point $x$ in the unit sphere $S_E$ and any sequence $\{x_n\}_{n \in \mathbb{N}} \subset S_E$ we have that
\begin{equation*}
	\lim_{n \to \infty}\left\|\frac{x + x_n}{2}\right\| = 1 \quad \implies \quad \lim_{n \to \infty}\|x - x_n\| = 0.
\end{equation*}

There are numerous results about LUR renormings and maps that transfer such norms from one Banach space to another. For a detailed overview of the available techniques the reader is referred to \cite{dgz}, \cite{guirao2022renormings}, \cite{motv}. In this note we consider the property of LUR renormability of spaces of continuous functions on Hausdorff compact spaces.

An equivalent LUR norm can be constructed whenever $E$ admits a separable projectional resolution of the identity. For $E = C(X)$ the simplest case is that of a compact ordinal interval (see, e.g., {\cite[Theorem VII.5.1]{dgz}}). Wide classes of compacta admit a suitable transfinite sequence of retractions that corresponds to a projectional resolution of the identity. The most general of them are Valdivia compacta (see, e.g., {\cite[Definition VI.7.2, Theorem VI.7.6]{dgz}}). Positive results have been obtained for some classes for which $C(X)$ does not necessarily admit a projectional resolution of the identity. Examples include $X$ scattered with $X^{(\omega_1)} = \emptyset$ (see \cite{hay-rog_scattered}), $X$ a compact subset of functions on a Polish space, containing countably many discontinuities (see \cite{hmo_count-disc}). A particular case of the latter is the Helly compact.

A natural question to consider for $C(X)$-type spaces is the stability of the LUR renormability property under various topological transformations. One example is the fact that $C\left(\bigcup_{n =1}^{\infty} X_n\right)$ inherits an equivalent LUR norm from the spaces $C(X_n)$ (\cite{mot97}). Results in \cite{jnr1999continuous} and \cite{ribbab} give stability under products, that is $C\left(\prod_{\alpha \in \Gamma}X_\alpha\right)$ is LUR renormable, provided that each $C(X_\alpha)$ is LUR renormable.

LUR norms are often constructed through transfer techniques, that is a space $E$ can be renormed by such a norm if there exists a suitable map between $E$ and a LUR space $F$. This map does not need to be linear, as is the case for the technique developed in \cite{motv}. For a space $C(X)$ it is natural to ask if the norm can be transferred from a LUR space $C(Y)$ provided that there exists a suitable map between the compact spaces $X$ and $Y$.

The space $C(X)$ is LUR renormable if $X$ is the continuous image of a compact space for which the property holds. What can we say about the inverse implication under some additional conditions? Naturally, we may consider requiring some property of the fibers of the map. However, in \cite{kubis2011finitely} Kubi\'{s} and Molt\'{o} gave an example of a compact that admits a continuous map onto a metric space with two-point fibers that fails to have a Kadec renorming. Thus, a stability result in this direction would require stronger assumptions on the projection than simple continuity.

In this note we will consider a special class of mappings defined by Fedorchuk in \cite{fed69}. Here we will give an equivalent definition for the case that is of concern, namely mappings between Hausdorff compact spaces.

\begin{definition}\label{fc_def1}
	Let $X$ and $Y$ be Hausdorff compact spaces and $\pi$ a continuous map from $X$ onto $Y$. Then $\pi$ is called fully closed if for any closed disjoint subsets $F_1$, $F_2$ of $X$, the intersection of their images $\pi(F_1) \cap \pi(F_2)$ is finite.
\end{definition}

Notable examples of fully closed maps include the projections from the double arrow space or the lexicographic square onto the interval. More generally the projection of the lexicographic product of two totally ordered compacta onto the first coordinate is fully closed. In Section \ref{sec_fc} we present a class of compacta (Fedorchuk) that are constructed through fully closed mappings. In Section \ref{sec_concl} we show that any scattered compact can be obtained from a similar construction. 

The question of LUR renormability of $C(X)$ where the compact $X$ admits a fully closed projection onto another compact $Y$ was considered in \cite{gist}. A positive result was obtained for the case of $Y$ and the fibers of the projection being metrizable.

The natural question that arises is to find the minimal requirements for $Y$ and the fibers that would allow for a LUR renorming of $C(X)$. Evidently, $Y$ needs to be such that $C(Y)$ admits an equivalent LUR norm. As to the fibers, we cannot reach such a conclusion from the mere continuity of the map. We give a simple example in Section \ref{sec_concl} to illustrate that even full closedness does not lead to necessity of the condition. However, LUR renormability of the spaces of continuous functions over the fibers remains a natural condition to impose. We shall show that it is sufficient. The main result is thus the following: 

\begin{theorem}\label{main_thm}
	Let $X$ and $Y$ be Hausdorff compacta and $\pi$ a fully closed mapping from $X$ onto $Y$. Then $C(X)$ admits an equivalent $\tau_p$-lower semicontinuous LUR norm provided that the spaces $C(Y)$ and $C\left(\pi^{-1}(y)\right)$ for $y \in Y$ admit equivalent $\tau_p$-lsc LUR norms.
\end{theorem}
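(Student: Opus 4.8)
The first and decisive step is to extract the combinatorial content of full closedness. Writing $\osc_f(y) = \max_{\pi^{-1}(y)} f - \min_{\pi^{-1}(y)} f$ for the oscillation of $f \in C(X)$ on the fiber over $y$ (attained, since fibers are compact and $f$ continuous), the plan is to prove that for every $f \in C(X)$ and every $\varepsilon > 0$ the set $\{y \in Y : \osc_f(y) \ge \varepsilon\}$ is finite. The argument is a level-set one: cover the range of $f$ by a finite grid $t_0 < \dots < t_N$ of mesh $< \varepsilon/4$; for each pair $t_i < t_j$ with $t_j - t_i \ge \varepsilon/2$ the sets $\{f \ge t_j\}$ and $\{f \le t_i\}$ are closed and disjoint, so by Definition \ref{fc_def1} their images meet in a finite set. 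If $\osc_f(y) \ge \varepsilon$, the fiber carries points $x_+, x_-$ with $f(x_+) - f(x_-) \ge \varepsilon$, and choosing $t_i$ least above $f(x_-)$ and $t_j$ greatest below $f(x_+)$ places $y$ in one of these finitely many finite intersections. Hence the oscillation set lies in a finite union of finite sets, and in particular $\{y : \osc_f(y) > 0\}$ is countable: each $f$ is constant on all but countably many fibers.

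\textbf{Architecture of the norm.} This lemma splits the data of $f$ into a \emph{horizontal} part over $Y$ and \emph{vertical} parts on the fibers where $f$ genuinely oscillates, which at each scale number finitely many. For the horizontal part I would use the envelopes $\overline f(y) = \max_{\pi^{-1}(y)} f$ and $\underline f(y) = \min_{\pi^{-1}(y)} f$; these are respectively upper and lower semicontinuous on $Y$ and, by the argument above, continuous at every point of zero oscillation, hence continuous off the countable oscillation set, while $f \mapsto \overline f(y)$ and $f \mapsto -\underline f(y)$ are $\tau_p$-lower semicontinuous for each fixed $y$. I would measure the horizontal behaviour by feeding a continuous function built from $\overline f, \underline f$ into the given $\tau_p$-lsc LUR norm $\|\cdot\|_Y$ of $C(Y)$, and the vertical behaviour, fiber by fiber, by the restrictions $f|_{\pi^{-1}(y)} \in C(\pi^{-1}(y))$ through the given norms $\|\cdot\|_y$. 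Concretely, I would assemble a countable family of maps $\Phi_n$ from $C(X)$ into these LUR spaces and produce the equivalent norm by the nonlinear transfer technique of \cite{motv} (see also \cite{dgz}): it suffices to exhibit countably many suitably $\tau_p$-to-norm continuous maps into LUR spaces whose joint LUR-convergence, together with convergence of the ambient norm, forces convergence in $C(X)$. The finiteness lemma is precisely what makes such a countable family adequate, since at scale $1/k$ only finitely many fibers are active.

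\textbf{Main obstacle and how I would address it.} Three difficulties carry the real weight. First, there is \emph{no canonical linear projection} $C(X) \to C(Y)$, so the horizontal component must be built from the semicontinuous envelopes rather than from an honest retraction; since $\overline f, \underline f$ fail to be continuous exactly on the countable oscillation set, one cannot apply $\|\cdot\|_Y$ to them directly, but must approximate them by genuinely continuous functions in a controlled, $\tau_p$-lsc manner, or transfer through a space of countably-discontinuous functions over $Y$ in the spirit of \cite{hmo_count-disc}. Second, $Y$ need not be metrizable, so one \emph{cannot} fix a countable dense family of fibers once and for all; the indexing of the vertical components is therefore genuinely function-dependent, and the finiteness lemma is what keeps it uniformly controlled, as opposed to the metrizable case treated in \cite{gist}. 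Third, and most delicate, is the LUR verification: for a sequence $f_n$ with $\|(f + f_n)/2\| \to \|f\|$ the active fibers of $f_n$ may vary with $n$ and need not coincide with those of $f$, so one must show they cluster at the active fibers of $f$, pass the vertical LUR-convergence to the limit fiber by fiber via $\|\cdot\|_y$, recover horizontal convergence from $\|\cdot\|_Y$, and recombine to conclude $\|f - f_n\|_\infty \to 0$. I expect this last recombination, reconciling the moving finite sets of active fibers with the semicontinuity of the envelopes, to be the principal technical hurdle.
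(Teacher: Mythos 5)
Your finiteness lemma is correct, and it is exactly Proposition~\ref{c_0.osc} of the paper: the level-set argument you give (pairs of closed disjoint sets $\{f\ge t_j\}$, $\{f\le t_i\}$ with finite image intersection) is a valid alternative to the paper's proof, which instead works from the small-image characterization in Definition~\ref{fully_closed}. From that point on, however, the proposal is an architecture sketch rather than a proof, and the architecture you choose runs into an obstacle that you name but do not resolve. The envelopes $\overline f$ and $\underline f$ are not continuous on $Y$, so they are not elements of $C(Y)$ and the hypothesis that $C(Y)$ is LUR renormable cannot be applied to them; ``approximate them in a controlled $\tau_p$-lsc manner'' is precisely the missing content, and the final LUR verification with function-dependent finite sets of active fibers is left entirely open (you yourself flag it as ``the principal technical hurdle''). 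As written, the proposal establishes the first step and then defers the construction of the norm.

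The paper avoids all of these difficulties by making the decomposition linear. The subspace of $C(X)$ consisting of functions constant on every fiber is isometric to $C(Y)$ (a fiber-constant function factors as $g\circ\pi$ with $g$ continuous, since $\pi$ is a closed quotient map), and this subspace is exactly $\Ker(T)$ for the linear map $Tf=\{f|_{\pi^{-1}(y)}-f(0_y)\}_{y\in Y}$ into $\bigoplus_{c_0(Y)}H^0_y$; your finiteness lemma is what guarantees $T$ lands in the $c_0$-sum. The one genuinely new ingredient, absent from your proposal in any form, is the extension lemma (Lemma~\ref{keylemma} and Corollary~\ref{key_cor}): a continuous function prescribed on finitely many fibers extends to a continuous function on $X$ that is constant on all remaining fibers. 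This uses full closedness a second time, through the Hausdorffness of the quotient $Y^y$, and it is what makes $T$ surjective, hence $C(X)/C(Y)\cong\bigoplus_{c_0(Y)}H^0_y$, which is LUR renormable by Proposition~\ref{c_0}. The three-space property (Theorem~\ref{3space}) then assembles the norm, replacing your entire ``recombination'' step, and the $\tau_p$-lower semicontinuity falls out of the explicit description of the predual subspaces. To close your argument you would need, at minimum, to replace the envelopes by the fiber-constant subspace and to supply the extension lemma; without those two pieces the proposal does not yield the theorem.
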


\section{Fully closed maps and Fedorchuk compacta} \label{sec_fc}

Fedorchuk introduced fully closed mappings in \cite{fed69} as they proved to be a powerful tool for constructing counterexamples in dimension theory. Let us give the original definition. We start by introducing the following notation. For $X$ and $Y$ sets, $A \subset X$ and a function $f: X \to Y$, we call the small image of $A$ under $f$, denoted $f^\#(A)$, the following subset of $Y$:
\begin{equation*}
	f^\#(A) := \{y \in Y: f^{-1}(y) \subseteq A\} = Y \setminus f\left(X \setminus A\right).
\end{equation*}

The definition of a fully closed map in its most general form is given as follows:

\begin{definition} [\cite{fedFC}] \label{fully_closed}
	Let $X$ and $Y$ be topological spaces and $f: X \to Y$ a continuous map. $f$ is called fully closed at $y \in Y$ if for any finite cover $\{U_1, ..., U_s\}$ of $f^{-1}(y)$ the set $\left(\{y\} \cup \bigcup_{i = 1}^s f^\#\left(U_i\right)\right)$ is a neighborhood of $y$. The map is simply called fully closed if it is fully closed at every point of $Y$.
\end{definition}

In \cite{ivanov1984fedorchuk} Ivanov used the construction to introduce a class of compacta known as Fedorchuk compacta. We give the definition below. For some general information about inverse systems the reader is referred to {\cite[Chapter 2.5]{eng}} or \cite{fedFC}.

\begin{definition}\label{F-comp}
	Let $S = \{X_\alpha, \pi^\beta_\alpha, \alpha, \beta \in \mu\}$ be a continuous inverse system, where $X_\alpha$ are Hausdorff compacta, the neighboring bonding mappings $\pi^{\alpha + 1}_\alpha$ are fully closed, $X_0$ is a point, and the fibers $\left(\pi^{\alpha + 1}_\alpha\right)^{-1}(y)$ are metrizable compacta for all $\alpha + 1 \in \mu$ and all $y \in X_\alpha$. Then $\lim\limits_{\leftarrow}S$ is called a Fedorchuck compact of spectral height $\mu$, provided that the system is minimal. 
\end{definition}

Below we discuss some implications of the minimality of the system, but first let us restate the result in \cite{gist} in terms of Fedorchuk compacta:

\begin{theorem}[\cite{gist}]
	$C(X)$ admits an equivalent pointwise-lower semicontinuous LUR norm whenever $X$ is a Fedorchuk compact of spectral height $3$.
\end{theorem}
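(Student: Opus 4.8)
The plan is to derive this statement as a direct corollary of Theorem \ref{main_thm} by unpacking the definition of a Fedorchuk compact of spectral height $3$ (Definition \ref{F-comp}) and verifying that the base and the fibers of the relevant bonding map satisfy the hypotheses of that theorem. For $\mu = 3 = \{0, 1, 2\}$ the system is $X_0 \xleftarrow{\pi^1_0} X_1 \xleftarrow{\pi^2_1} X_2$, and since $2$ is the largest index the inverse limit coincides with $X_2$; thus $X = X_2$. Writing $x_0$ for the single point of $X_0$, the fiber $(\pi^1_0)^{-1}(x_0)$ is all of $X_1$, and it is required to be metrizable, so $Y := X_1$ is a metrizable compact. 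The neighboring bonding map $\pi := \pi^2_1 \colon X \to Y$ is fully closed, and its fibers $\pi^{-1}(y) = (\pi^2_1)^{-1}(y)$ are metrizable compacta for every $y \in Y$. (Minimality of the system plays no role in the implication.)

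It then remains to check that $C(Y)$ and each $C(\pi^{-1}(y))$ admit an equivalent $\tau_p$-lsc LUR norm, after which Theorem \ref{main_thm} applies verbatim. Both $Y$ and the fibers are metrizable compacta, so this reduces to the single fact that $C(K)$ carries a $\tau_p$-lsc LUR norm whenever $K$ is a metrizable compact. I would establish this as follows: fix a countable dense set $\{x_n\} \subseteq K$ and consider the bounded linear injection $T \colon C(K) \to c_0$ given by $T f = (2^{-n} f(x_n))_n$; injectivity follows from density of $\{x_n\}$ together with continuity, and each coordinate functional $f \mapsto 2^{-n} f(x_n)$ is $\tau_p$-continuous. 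The supremum norm is itself $\tau_p$-lower semicontinuous. Combining the $c_0$-injection with the sup norm through the standard transfer construction (see, e.g., \cite{dgz}) yields an equivalent LUR norm on $C(K)$, and because all the data entering the construction are either $\tau_p$-continuous (the coordinates of $T$) or $\tau_p$-lsc (the sup norm), the resulting norm is $\tau_p$-lower semicontinuous.

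With $C(Y)$ and all $C(\pi^{-1}(y))$ thus equipped with equivalent $\tau_p$-lsc LUR norms, Theorem \ref{main_thm} produces an equivalent $\tau_p$-lsc LUR norm on $C(X)$, which is the assertion. The genuine difficulty of the result is entirely absorbed into Theorem \ref{main_thm}; the main obstacle in this corollary is merely organisational — correctly reading off from Definition \ref{F-comp} that height $3$ forces both the base $Y = X_1$ and every fiber to be metrizable, and recalling the renorming of $C(K)$ for metrizable $K$ in a form that keeps track of pointwise lower semicontinuity.
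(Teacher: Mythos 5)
Your reduction is exactly the one the paper has in mind: the statement is quoted from \cite{gist} without proof, and the paper recovers it (together with its extension to any finite spectral height, Corollary \ref{finite-sh}) as an immediate consequence of Theorem \ref{main_thm}. Your unpacking of Definition \ref{F-comp} for $\mu=3$ is correct: $X=X_2$ because $2$ is the top index, $Y=X_1$ is metrizable because it is the unique fiber of $\pi^1_0$ over the singleton $X_0$, $\pi=\pi^2_1$ is fully closed with metrizable fibers, and minimality indeed plays no role in the implication. (For the record, the original argument in \cite{gist} is a direct construction exploiting metrizability of the base and of the fibers, not a corollary of anything like Theorem \ref{main_thm}; your route is the paper's, not theirs.)

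The one step that is not right as written is your renorming of $C(K)$ for $K$ metrizable compact. A bounded linear injection $T\colon C(K)\to c_0$ ``combined with the sup norm through the standard transfer'' yields an equivalent \emph{strictly convex} norm, not a LUR one: since $T^{-1}$ is unbounded, $\|T f_n - Tf\|\to 0$ gives no control on $\|f_n-f\|$, and injectability into $c_0(\Gamma)$ is in general far from sufficient for LUR renormability (the paper's own remark on the Ciesielski--Pol compact shows the two properties are independent). What you actually need here is Kadec's classical theorem that every separable Banach space admits an equivalent LUR norm, in the refined form (see \cite{dgz} or \cite{motv}) in which, for $K$ compact metric, the construction uses evaluations at a countable dense set $\{x_n\}\subset K$ together with distances to an increasing sequence of finite-dimensional subspaces, and produces a norm lower semicontinuous for $\sigma\bigl(C(K),\Span\{\delta_{x_n}\}\bigr)$; since that topology is coarser than $\tau_p$, lower semicontinuity for it implies $\tau_p$-lower semicontinuity, which is the observation that correctly finishes your last paragraph. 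With that citation repaired, the derivation is complete and coincides with the paper's.
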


As an immediate corollary of Theorem \ref{main_thm} we obtain an extension of this result. That is:

\begin{corollary}\label{finite-sh}
	$C(X)$ admits an equivalent pointwise-lower semicontinuous LUR norm whenever $X$ is a Fedorchuk compact of finite spectral height.
\end{corollary}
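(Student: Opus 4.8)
The plan is to argue by induction on the spectral height, which for a Fedorchuk compact $X = \lim\limits_{\leftarrow} S$ with finite index set $\mu = \{0, 1, \dots, n-1\}$ amounts to induction on $n$. The key structural observation is that the inverse limit of a finite inverse system coincides with its last term: since every coordinate of a thread is determined by the last one through the bonding maps, $\lim\limits_{\leftarrow} S \cong X_{n-1}$. Moreover, truncating the system at any level $k$ again yields a continuous inverse system (continuity is vacuous in the finite case) whose neighbouring bonding maps are fully closed and whose fibers are metrizable, with $X_0$ a point; hence $X_k$ is itself a Fedorchuk compact of spectral height $k+1$.

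For the base case I would take $n = 2$ (the case $n = 1$ being the trivial one-point space). Then $X = X_1 = (\pi^1_0)^{-1}(x_0)$ is a single fiber over the point $X_0 = \{x_0\}$, hence a metrizable compactum. Since $C(K)$ is separable for metrizable compact $K$, it admits an equivalent $\tau_p$-lower semicontinuous LUR norm, so the statement holds at this level. For the inductive step, assume the statement for spectral height $n-1$ and let $X = X_{n-1}$ have spectral height $n$. Put $Y := X_{n-2}$ and $\pi := \pi^{n-1}_{n-2} \colon X \to Y$. By Definition \ref{F-comp} the map $\pi$ is fully closed and each fiber $\pi^{-1}(y)$ is a metrizable compactum, so every $C\left(\pi^{-1}(y)\right)$ admits an equivalent $\tau_p$-lsc LUR norm exactly as in the base case. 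By the structural observation, $Y = X_{n-2}$ is a Fedorchuk compact of spectral height $n-1$, so the inductive hypothesis supplies an equivalent $\tau_p$-lsc LUR norm on $C(Y)$. All hypotheses of Theorem \ref{main_thm} are therefore met, and it produces an equivalent $\tau_p$-lsc LUR norm on $C(X)$, closing the induction.

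The argument is essentially bookkeeping layered on top of Theorem \ref{main_thm}, so the only point requiring genuine care is the availability of a $\tau_p$-lower semicontinuous LUR norm on $C(K)$ for metrizable $K$. This is classical: the supremum norm is already $\tau_p$-lower semicontinuous, and the standard separable LUR renorming can be arranged to retain $\tau_p$-lower semicontinuity. I would also remark that the minimality of the spectrum, which enters Definition \ref{F-comp}, plays no role in the present deduction: the argument uses only full closedness of the neighbouring bonds together with LUR renormability of the factor spaces. Consequently the conclusion holds in fact for the inverse limit of any finite system with fully closed neighbouring bonds, metrizable neighbouring fibers, and a one-point root, of which Fedorchuk compacta of finite spectral height are a special case.
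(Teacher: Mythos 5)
Your argument is correct and is precisely the induction the paper has in mind when it calls Corollary \ref{finite-sh} an ``immediate'' consequence of Theorem \ref{main_thm}: identify the finite inverse limit with its last term, handle the metrizable base case by the classical separable ($\tau_p$-lsc) LUR renorming, and apply Theorem \ref{main_thm} to each neighbouring bond. Your observation that minimality of the spectrum is irrelevant to the deduction is also accurate.
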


Let us now focus on some properties of fully closed mappings that are needed for the proof of the main theorem. First, we will discuss some useful characterizations of fully closed mappings. In order to do so, we need to introduce some notation, following \cite{fedFC}.

\begin{notation}
	Let $X$ and $Y$ be topological spaces and $\pi$ a continuous mapping from $X$ onto $Y$. If $M \subset Y$, by $Y^M$ we will denote the quotient space corresponding to the following equivalence classes:
	\begin{equation*}
		[x] \quad = \quad \left\{\begin{aligned} \quad
			&x, \qquad &x \in \pi^{-1}(M);\\
			&\pi^{-1}\left(\pi(x)\right), \qquad &\pi(x) \in Y \setminus M.
		\end{aligned}\right.
	\end{equation*}
	We will denote the corresponding quotient mapping from $X$ to $Y^M$ by $p^M$ and by $\pi^M: Y^M \to Y$ the unique mapping such that $\pi = \pi^M \circ p^M$.
	If $M = \{y\}$, $y \in Y$, we will simply write $Y^y$.
\end{notation}

In \cite{fedFC} Fedorchuk gives a list of equivalent formulations of Definition \ref{fully_closed} when the spaces are regular. In the following proposition we state the three of them that will be relevant for the proof of the main result.

\begin{proposition} [{\cite[II.1.6]{fedFC}}] \label{eqDef}
	Let $X$ and $Y$ be regular topological spaces and $f: X \to Y$ a closed map. Then the following are equivalent:
	\begin{enumerate}
		\item $f$ is fully closed. \label{def1}
		\item If $F_1, F_2 \subset X$ are closed and disjoint, the set $f(F_1) \cap f(F_2)$ is discrete. \label{def4}
		\item For any $y \in Y$, the space $Y^y$ is regular. \label{def7}
	\end{enumerate}
\end{proposition}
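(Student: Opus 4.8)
The plan is to verify the three equivalences by exploiting the elementary duality between small images and images of complements, namely $f^\#(X \setminus F) = Y \setminus f(F)$, which is built into the very definition of $f^\#$. I would first prove the two implications $(\ref{def1}) \Rightarrow (\ref{def4}) \Rightarrow (\ref{def1})$, establishing the equivalence of \emph{full closedness} and the \emph{discrete-intersection} condition, and then relate both to $(\ref{def7})$ through a direct analysis of the quotient topology on $Y^y$.

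For $(\ref{def1}) \Rightarrow (\ref{def4})$: given disjoint closed $F_1, F_2$ and any $y \in f(F_1) \cap f(F_2)$, the two open sets $X \setminus F_1$ and $X \setminus F_2$ cover all of $X$ (because $F_1 \cap F_2 = \emptyset$) and in particular cover the fibre $f^{-1}(y)$. Full closedness then makes $\{y\} \cup f^\#(X\setminus F_1) \cup f^\#(X \setminus F_2)$ a neighbourhood of $y$; by the duality this set is exactly $\{y\} \cup (Y \setminus (f(F_1) \cap f(F_2)))$, so $y$ is isolated in $f(F_1)\cap f(F_2)$. As $y$ was arbitrary, the intersection is discrete.

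The converse $(\ref{def4}) \Rightarrow (\ref{def1})$ is where I expect the main work. Fix $y$ and a finite open cover $U_1, \dots, U_s$ of $f^{-1}(y)$; writing $F_i = X \setminus U_i$, the desired neighbourhood $\{y\}\cup\bigcup_i f^\#(U_i)$ equals $\{y\}\cup(Y\setminus\bigcap_i f(F_i))$, so the goal is to show $y$ is isolated in $\bigcap_i f(F_i)$. The obstacle is that the $F_i$ need not be pairwise disjoint, so $(\ref{def4})$ cannot be applied to them directly. I would argue by contradiction: if $y$ is non-isolated, pick a net $y_\alpha \to y$ with $y_\alpha \ne y$ and $y_\alpha \in \bigcap_i f(F_i)$, and for each $i$ choose $x^i_\alpha \in f^{-1}(y_\alpha) \cap F_i$. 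Passing to a subnet so that $x^i_\alpha \to \xi_i$, closedness and continuity of $f$ force $\xi_i \in f^{-1}(y) \cap F_i$, hence $\xi_i \notin U_i$. Since $f^{-1}(y) \subseteq \bigcup_i U_i$, the $\xi_i$ cannot all coincide, so there are indices $i \ne k$ with $\xi_i \ne \xi_k$. Separating $\xi_i$ and $\xi_k$ by open sets with disjoint closures and intersecting these with $F_i, F_k$ produces disjoint closed sets $F, G$ with $y_\alpha \in f(F) \cap f(G)$ for large $\alpha$, contradicting that $(\ref{def4})$ makes this intersection discrete. This reduction of an arbitrary finite cover to a single disjoint pair, together with the limit argument inside the fibres, is the technical heart of the proof.

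Finally, for the equivalence with $(\ref{def7})$ I would compute the quotient topology on $Y^y$ explicitly. Using that $f$ is closed, the factor maps $p^y$ and $\pi^y$ are closed, and for $U$ open in $X$ the saturated set $f^{-1}(f^\#(U)) = X \setminus f^{-1}(f(X\setminus U))$ is open; this yields a description of the neighbourhood filter at a fibre point $x_0 \in f^{-1}(y) \subseteq Y^y$ in terms of small images $f^\#(U)$ with $x_0 \in U$. Regularity of $Y^y$ at such points then says precisely that the small images of a finite open cover of the fibre, together with $y$, form a neighbourhood of $y$, that is, condition $(\ref{def1})$, while at the remaining points $Y^y$ is locally a copy of the regular space $Y$. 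The main care here is the bookkeeping of the quotient topology and the behaviour of $\pi^y$ over the single point $y$; once the neighbourhood base is identified, the equivalence $(\ref{def1}) \Leftrightarrow (\ref{def7})$ is immediate.
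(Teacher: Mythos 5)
The paper offers no proof of this proposition at all: it is quoted from Fedorchuk's survey (the cited II.1.6) and used as a black box, so there is nothing in the paper to compare your argument against. Judging the proposal on its own merits: your implication (\ref{def1})~$\Rightarrow$~(\ref{def4}) is correct and complete; applying the definition of full closedness to the two-element open cover $\{X\setminus F_1,\,X\setminus F_2\}$ of the fibre and using $f^\#(X\setminus F_i)=Y\setminus f(F_i)$ is exactly the right computation.

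The converse (\ref{def4})~$\Rightarrow$~(\ref{def1}) has a genuine gap at the step ``passing to a subnet so that $x^i_\alpha\to\xi_i$''. The proposition is stated for regular spaces and closed (not perfect) maps, so neither $X$ nor the fibres $f^{-1}(y_\alpha)$ are compact, and the net $(x^i_\alpha)_\alpha$ need not have any convergent subnet. Everything after that point is sound: if the limits $\xi_i\in f^{-1}(y)\cap F_i$ exist, then they cannot all coincide (a common value would lie in some $U_j$ while $\xi_j\notin U_j$), and shrinking closed neighbourhoods of two distinct $\xi_i,\xi_k$ intersected with $F_i,F_k$ does produce disjoint closed sets whose images both contain a tail of $(y_\alpha)$ together with $y$, contradicting discreteness. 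So your argument proves (\ref{def4})~$\Rightarrow$~(\ref{def1}) for perfect maps --- in particular in the compact Hausdorff setting where the paper actually invokes the proposition --- but not for the statement as written. To repair it in the stated generality you would need a compactness-free reduction (e.g.\ an induction on the size of the cover using a shrinking of $\{U_i\}$ relative to the fibre), and it is not obvious how to do this without some normality; this is presumably why the paper defers to Fedorchuk rather than proving it.

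The equivalence with (\ref{def7}) is only a plan, not a proof. The sentence ``Regularity of $Y^y$ at such points then says precisely that the small images of a finite open cover of the fibre, together with $y$, form a neighbourhood of $y$'' asserts the entire content of the equivalence without argument: regularity of $Y^y$ at a point $x_0\in f^{-1}(y)$ is about separating $x_0$ from closed sets by saturated open sets, whereas full closedness at $y$ quantifies over finite open covers of the whole fibre, and bridging these (identifying the neighbourhood filter of $x_0$ in $Y^y$ via small images, and explaining where the finite cover comes from) is the actual work. The claim that $Y^y$ is ``locally a copy of $Y$'' away from the collapsed fibre also needs justification (that $\pi^y$ restricted over $Y\setminus\{y\}$ is a continuous closed bijection, hence a homeomorphism). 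As it stands, the third equivalence is an outline of where the proof should live, not a proof.
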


Some authors use (\ref{def4}) as definition of a fully closed map (as we did in Section \ref{sec_intro}) as it is the easiest to state. If the condition of regularity is replaced by compactness, we obtain precisely Definition \ref{fc_def1}. Compactness would also allow for the replacement of regular in (\ref{def7}) with Hausdorff.

The following proposition, first shown in \cite{gist}, is crucial for the proof of Theorem \ref{main_thm}, as it ties full closedness to a property of the space of continuous functions.

\begin{proposition}\label{c_0.osc}
	Let $\pi: X \to Y$ be a continous surjective map between Hausdorff compacta. Consider the linear map $T_{\pi}: C(X) \to l_\infty(Y)$ given by $T_{\pi}(f) = \left\{\osc \limits_{\pi^{-1}(y)} f\right\}_{y \in Y}$.Then $\pi$ is fully closed if and only if $T_{\pi}$ maps $C(X)$ into $c_0(Y)$.
\end{proposition}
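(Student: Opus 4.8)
The plan is to prove the two implications separately, using the formulation of full closedness from Definition~\ref{fc_def1} (finiteness of $\pi(F_1)\cap\pi(F_2)$ for closed disjoint $F_1,F_2$). Throughout I use that for an arbitrary index set $Y$ one has $g\in c_0(Y)$ if and only if for every $\epsilon>0$ the set $\{y\in Y:\ |g(y)|\ge\epsilon\}$ is finite, and that $T_\pi(f)$ is automatically a bounded nonnegative family, since $0\le\osc_{\pi^{-1}(y)}f\le 2\|f\|_\infty$; so the only issue is the size of its super-level sets. For each fiber the oscillation is actually attained, because $\pi^{-1}(y)$ is a closed, hence compact, subset of $X$ and $f$ is continuous.

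For the easy direction I would argue by contraposition. Suppose $\pi$ is not fully closed, so there are closed disjoint sets $F_1,F_2\subset X$ with $\pi(F_1)\cap\pi(F_2)$ infinite. Since $X$ is compact Hausdorff, hence normal, Urysohn's lemma yields $f\in C(X)$ with $f\equiv 0$ on $F_1$ and $f\equiv 1$ on $F_2$. For any $y\in\pi(F_1)\cap\pi(F_2)$ the fiber $\pi^{-1}(y)$ meets both $F_1$ and $F_2$, so $f$ takes both the value $0$ and the value $1$ on it, whence $\osc_{\pi^{-1}(y)}f\ge 1$. Thus $\{y:\osc_{\pi^{-1}(y)}f\ge 1\}\supseteq\pi(F_1)\cap\pi(F_2)$ is infinite, $T_\pi(f)\notin c_0(Y)$, and $T_\pi$ does not map $C(X)$ into $c_0(Y)$.

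For the converse -- the main part -- fix $f\in C(X)$ and $\epsilon>0$, and set $h=\epsilon/2$ and $R=\|f\|_\infty$. Let $t_0<\dots<t_N$ be the grid points $-R+jh$ lying in $[-R,R]$, a finite set. For each $j$ put $A_j=\{x\in X: f(x)\le t_j\}$ and $B_j=\{x\in X: f(x)\ge t_j+h\}$; these are closed and disjoint, so full closedness gives that each $\pi(A_j)\cap\pi(B_j)$ is finite. The key point is that these finitely many sets cover the super-level set $\{y:\osc_{\pi^{-1}(y)}f\ge\epsilon\}$: if $\mu$ and $\mathcal M$ denote the minimum and maximum of $f$ on $\pi^{-1}(y)$, then $\mathcal M-\mu\ge\epsilon=2h$, so the interval $[\mu,\mathcal M-h]\subseteq[-R,R]$ has length at least $h$ and therefore contains some grid point $t_j$; for that $t_j$ a minimizer of $f$ on the fiber lies in $A_j$ and a maximizer lies in $B_j$, whence $y\in\pi(A_j)\cap\pi(B_j)$. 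Consequently $\{y:\osc_{\pi^{-1}(y)}f\ge\epsilon\}\subseteq\bigcup_{j=0}^N\bigl(\pi(A_j)\cap\pi(B_j)\bigr)$ is finite, and since $\epsilon$ was arbitrary, $T_\pi(f)\in c_0(Y)$.

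I expect the only real work to be in this last direction, and specifically in the bookkeeping that guarantees the covering: one must choose the mesh $h$ and the gap between the thresholds defining $A_j$ and $B_j$ so that $A_j$ and $B_j$ stay disjoint (which is what yields finiteness) while still catching every fiber on which the oscillation is at least $\epsilon$ (which is what yields the covering). Taking both quantities equal to $\epsilon/2$ balances these two competing requirements. Beyond attainment of the oscillation on fibers and normality of $X$ for Urysohn's lemma, no further use of compactness is needed.
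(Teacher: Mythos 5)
Your proof is correct, and the interesting direction (fully closed $\Rightarrow$ $T_\pi(C(X))\subseteq c_0(Y)$) is argued quite differently from the paper. The paper works with the small-image characterization of full closedness (Definition~\ref{fully_closed}): it supposes the set $Y_\delta$ of fibers with oscillation exceeding $\delta$ is infinite, uses compactness of $Y$ to extract an accumulation point $y_0$ of $Y_\delta$, covers $\pi^{-1}(y_0)$ by finitely many open sets of small oscillation, and derives a contradiction from the fact that $\{y_0\}\cup\bigcup_k \pi^{\#}(U_k)$ is a neighborhood of $y_0$. You instead work directly from Definition~\ref{fc_def1} and give a quantitative covering: the level sets $A_j=\{f\le t_j\}$ and $B_j=\{f\ge t_j+h\}$ are closed and disjoint, so each $\pi(A_j)\cap\pi(B_j)$ is finite, and your pigeonhole argument on the grid (an interval of length at least $h$ inside $[-R,R]$ meets the mesh-$h$ grid) correctly shows these finitely many finite sets cover $\{y:\osc_{\pi^{-1}(y)}f\ge\epsilon\}$; the choice $h=\epsilon/2$ does exactly the balancing you describe. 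What your route buys is that it avoids the accumulation-point argument and the $\pi^{\#}$ machinery entirely, using only the finite-intersection formulation of full closedness plus attainment of the oscillation on compact fibers, and it even yields an explicit bound of roughly $4\|f\|_\infty/\epsilon$ finite sets covering the bad set; the paper's version is shorter once the equivalence of the two definitions is in hand and generalizes more naturally to the non-compact setting of Definition~\ref{fully_closed}. Your contrapositive argument for the other direction coincides with the paper's. One stylistic note: since oscillation is nonnegative, the phrase ``$|g(y)|\ge\epsilon$'' can be simplified, and you should state explicitly (as you implicitly do) that disjointness of $A_j$ and $B_j$ follows from $t_j<t_j+h$; otherwise nothing is missing.
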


For completeness of the exposition we will include a proof of the statement. Note that this proof is slightly different from the one given in \cite{gist}.

\begin{proof}
	Let $\pi$ be fully closed and assume that for some $f \in C(X)$ and some $\delta >0$ the set $Y_{\delta} = \left\{y \in Y: \osc \limits_{\pi^{-1}(y)} f > \delta\right\}$ is infinite. From compactness we can choose $y_0 \in Y$ to be an accumulation point of $Y_{\delta}$.
	
	For $x \in \pi^{-1}(y_0)$ choose an open neighborhood $U_x$ of $x$ such that $\osc\limits_{U_x} f < \delta$. The cover $\left\{U_x\right\}_{x \in \pi^{-1}(y_0)}$ of the compact fiber has a finite subcover, denote it $\left\{U_k\right\}_{k = 1}^n$. Then $\{y_0\} \cup \bigcup_{k = 1}^n \pi^{\#}(U_k)$ is a neighborhood of $y_0$ by the definition of full closedness.
	
	As $y_0$ is an accumulation point of $Y_{\delta}$, we can pick a point $y'$ in $Y_{\delta}$, distinct from $y_0$ in this neighborhood. Then $y' \in \pi^{\#}(U_k)$ for some $k \in \{1, \dots, n\}$. But this implies that $\pi^{-1}(y') \subset U_k$ and therefore $\osc\limits_{\pi^{-1}(y')} f \leq \osc\limits_{U_k} f < \delta$ which contradicts the choice of $y'$.
	
	To show the inverse implication assume that $\pi$ is not fully closed. By Proposition \ref{eqDef} (\ref{def4}) there exist closed disjoint sets $F_1, F_2$ in $X$ such that $f(F_1) \cap f(F_2)$ is infinite. By Urysohn's lemma we can construct a continous function $f$ on $X$ satisfying $\left.f\right\vert_{F_1} = 1$ and $\left.f\right\vert_{F_2} = 0$. Then $\osc\limits_{\pi^{-1}(y)} f = 1$ for all $y$ in the infinite set $f(F_1) \cap f(F_2)$, a contradiction.
\end{proof}

Let us discuss the minimality condition in the definition of a Fedorchuk compact. If $X = \lim\limits_{\leftarrow} \left\{X_\alpha, \pi^\alpha_\beta, \mu\right\}$ is a Fedorchuk compact, then $X$ cannot be obtained as the inverse limit of such a system of spectral height less than $\mu$. This implies that for any $\gamma$ with $\gamma + 2 \in \mu$ the projection $\pi^{\gamma + 2}_\gamma$ has at least one of the two properties:
\begin{itemize}
	\item $\left(\pi^{\gamma + 2}_\gamma\right)^{-1}(y)$ is not metrizable for some $y \in X_\gamma$.
	\item The projection $\pi^{\gamma + 2}_\gamma$ is not fully closed.
\end{itemize}

A composition of fully closed mappings is not necessarily fully closed. A simple example of this fact can be found in {\cite[II.1.12]{fedFC}}. However, the map $\pi^2_0$ sending $X_2$ to a point is always fully closed. The minimality condition then implies that $X_2$ is not metrizable. The following observation, made by Fedorchuk, gives a useful metrizability condition. 

\begin{proposition} [{\cite[Proposition II.3.10]{fedFC}}]
	Let $\pi: X \to Y$ be a fully closed mapping between Hausdorff compacta. Then $X$ is metrizable if and only if the following conditions hold:
	\begin{itemize}
		\item $Y$ is metrizable;
		\item All the fibers $\pi^{-1}(y)$ are metrizable;
		\item The set of nontrivial fibers $\left\{y \in Y: \left|\pi^{-1}(y)\right| \geq 2\right\}$ is countable.
	\end{itemize}
\end{proposition}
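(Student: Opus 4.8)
The plan is to prove the two implications separately, observing that full closedness is needed only for the direct one. Suppose first that $X$ is metrizable. Since $Y = \pi(X)$ is a Hausdorff continuous image of a compact metric space, it is itself compact metrizable, and each fiber $\pi^{-1}(y)$, being a closed subspace of a metric space, is metrizable; both of these facts hold for any continuous surjection between compacta and require no further argument. The real content of this direction is the countability of the set $N := \{y \in Y : |\pi^{-1}(y)| \geq 2\}$ of nontrivial fibers, and this is exactly where I would bring in full closedness through Proposition \ref{c_0.osc}.

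To control $N$, I would use that $X$ compact metrizable makes $C(X)$ separable; fix a countable dense set $\{f_k\}_{k \in \N} \subset C(X)$. By Proposition \ref{c_0.osc}, full closedness gives $T_\pi(f_k) = \{\osc_{\pi^{-1}(y)} f_k\}_{y} \in c_0(Y)$ for every $k$, so each support $S_k := \{y : \osc_{\pi^{-1}(y)} f_k > 0\}$ is countable (an element of $c_0(Y)$ exceeds $1/m$ at only finitely many $y$). I would then show $N = \bigcup_k S_k$, which is countable. The inclusion $S_k \subseteq N$ is immediate, since positive oscillation forces at least two points in the fiber. For the reverse, if $x_1 \neq x_2$ lie in $\pi^{-1}(y)$, Urysohn's lemma yields $f \in C(X)$ with $|f(x_1)-f(x_2)| = c > 0$; choosing $f_k$ with $\|f-f_k\|_\infty < c/3$ keeps $|f_k(x_1)-f_k(x_2)| \geq c/3 > 0$, so $y \in S_k$.

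For the converse I would build a countable family of continuous functions on $X$ separating its points, which suffices for metrizability of the compactum $X$ by embedding it into a countable power of $\R$. Since $Y$ is compact metric, $C(Y)$ is separable and I can fix a countable family $\{g_j\}$ separating the points of $Y$; the pullbacks $\{g_j \circ \pi\}$ then separate any two points lying in distinct fibers. Two points $x_1 \neq x_2$ with $\pi(x_1) = \pi(x_2) = y$ can only occur when $y \in N$, which is countable by hypothesis, say $N = \{y_n\}$. For each $n$ the fiber $\pi^{-1}(y_n)$ is compact metrizable, hence carries a countable separating family $\{h_{n,m}\}_m \subset C(\pi^{-1}(y_n))$, which I extend by Tietze's theorem to $\{\tilde h_{n,m}\}_m \subset C(X)$. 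The countable collection $\{g_j \circ \pi\} \cup \{\tilde h_{n,m}\}$ then separates all points of $X$: distinct fibers by the $g_j \circ \pi$, and points of a common (necessarily nontrivial) fiber $\pi^{-1}(y_n)$ by the corresponding $\tilde h_{n,m}$, since these agree with $h_{n,m}$ on the fiber. Sending $x \mapsto (f_i(x))_i$ into $[0,1]^{\N}$ gives a continuous injection, which is a homeomorphism onto its image by compactness, so $X$ is metrizable.

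The main obstacle is the countability of $N$ in the forward direction: this is the only place where full closedness is essential, and it cannot be dropped, as the (non-fully-closed) projection of $[0,1]^2$ onto the first coordinate is a continuous surjection between metric compacta with uncountably many nontrivial fibers. Everything in the converse is routine once $N$ is known to be countable; the only minor care needed there is the approximation step guaranteeing that the pulled-back and extended families genuinely separate points, together with the standard fact that a compact Hausdorff space carrying a countable separating family of continuous functions is metrizable.
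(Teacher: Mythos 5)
The paper does not prove this proposition at all: it is quoted verbatim from Fedorchuk (\cite[Proposition II.3.10]{fedFC}) and used as a black box, so there is no in-paper argument to compare yours against. Judged on its own, your proof is correct. The forward direction is the only place where full closedness matters, and your route through Proposition \ref{c_0.osc} is a clean, self-contained way to get it: separability of $C(X)$ gives a countable dense family $\{f_k\}$, full closedness puts each oscillation function $T_\pi(f_k)$ in $c_0(Y)$ so that each set $S_k=\{y:\osc_{\pi^{-1}(y)}f_k>0\}$ is countable, and the $c/3$-approximation argument correctly shows that every nontrivial fiber is witnessed by some $f_k$, so $N=\bigcup_k S_k$ is countable. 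The converse is, as you say, routine and does not use full closedness: pullbacks of a countable separating family on $Y$ together with Tietze extensions of countable separating families on the countably many nontrivial (compact metrizable) fibers separate the points of $X$, and a compact Hausdorff space with a countable separating family of continuous functions embeds in $[0,1]^{\N}$, hence is metrizable. This functional-analytic argument is in the spirit of the rest of the paper (it reuses Proposition \ref{c_0.osc}, which the paper proves independently), whereas Fedorchuk's original treatment is purely topological; either way the statement stands, and your observation that the projection of $[0,1]^2$ onto $[0,1]$ blocks the forward direction without full closedness is the right sanity check.
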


The first condition implies that a Fedorchuk compact $X$ of spectral height $3$ or more cannot be metrizable and consequently $C(X)$ is never separable. The third condition tells us that the number of nontrivial fibers of the map $\pi^2_1: X_2 \to X_1$ has to be uncountable.

\section{Preliminaries from renorming theory}\label{sec_norm}

Let us consider the lexicographic product $K \times L$ of two totally ordered spaces $K$ and $L$, compact in the order topology. It can be shown that the projection $\pi: K \times L \to K$ onto the first coordinate is fully closed. Notable examples are the double arrow space and the lexicographic square. In \cite{jnr95} Jayne, Namioka and Rogers proved that $C(K \times L)$ is LUR renormable, provided that $C(K)$ and $C(L)$ are LUR renormable.

The proof of this fact, in particular a version given in \cite{motv}, is the principal inspiration for the method we will use to prove Theorem \ref{main_thm} in Section \ref{sec_main}. This method relies on the fact that LUR renormability is a three-space property. More precisely, we have the following

\begin{theorem}[\cite{gtwz}, see also {\cite[Chapter 5]{motv}}] \label{3space}
	Let $X$ be a Banach space, $Y$ a closed subspace of $X$ such that $Y$ and $X/Y$ both admit equivalent LUR norms. Then $X$ is also LUR renormable.
	
	In addition, assume that the LUR norms on $Y$ and $X/Y$ are respectively $\sigma(Y,F)$ and $\sigma(X/Y, G)$-lsc for some subspaces $F \subset X^*$ and $E \subset \left(X/Y\right)^*$ . Then the LUR norm on $X$ can be chosen $\sigma(X, E)$-lower semicontinuous, where
	\begin{equation*}
		E = F + Q^* G,
	\end{equation*}
	Q being the canonical quotient map from $X$ onto $X/Y$.
\end{theorem}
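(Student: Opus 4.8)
The plan is to obtain the norm on $X$ from the Deville--Godefroy--Zizler summation scheme and to track lower semicontinuity summand by summand. Write $Q\colon X\to X/Y$ for the quotient map and let $\|\cdot\|_1,\|\cdot\|_2$ be the given LUR norms on $Y$ and $X/Y$. The first step is the convexity reduction: if $\{\phi_n\}_{n\ge 0}$ are nonnegative, convex, $2$-homogeneous functions with $\sum_n 2^{-n}\phi_n$ finite and equivalent to $\|\cdot\|^2$, then for the norm given by $|x|^2=\sum_n 2^{-n}\phi_n(x)$ the relation $2|x|^2+2|x_k|^2-|x+x_k|^2\to 0$ forces every individual deficit $\Delta_n^k:=2\phi_n(x)+2\phi_n(x_k)-\phi_n(x+x_k)\to 0$, since each $\Delta_n^k\ge 0$ and the tail is dominated uniformly in $k$. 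Thus it is enough to choose the $\phi_n$ so that each is $\sigma(X,E)$-lsc and so that the conjunction ``$\Delta_n^k\to 0$ for all $n$'' implies $\|x_k-x\|\to 0$; the norm $|\cdot|$ is then simultaneously $\sigma(X,E)$-lsc and LUR.

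The quotient direction is transparent. Put $\phi_0(x)=\|Qx\|_2^2$: it is convex and $2$-homogeneous, and since $Q$ is $\sigma(X,Q^*G)$-to-$\sigma(X/Y,G)$ continuous and $\|\cdot\|_2^2$ is $\sigma(X/Y,G)$-lsc, the composition $\phi_0$ is $\sigma(X,Q^*G)$-lsc. The deficit $\Delta_0^k\to 0$ is exactly the LUR premise for $\|\cdot\|_2$ applied to $Qx,Qx_k$, so it yields $\|Qx_k-Qx\|_2\to 0$; this is the single piece of information the quotient provides, and it contributes the summand $Q^*G$ to $E$. For the subspace direction the natural device is an inf-convolution $\phi_n(x)=\inf_{y\in Y}\bigl(n\|x-y\|^2+\|y\|_1^2\bigr)$, $n\ge 1$; these are convex, $2$-homogeneous, equivalent to $\|\cdot\|^2$, and encode $\|\cdot\|_1$ in the limit $n\to\infty$. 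Writing $y^{(n)},y_k^{(n)}$ for the (attained) minimizers at $x,x_k$ and bounding $\phi_n(x+x_k)$ by the admissible choice $y^{(n)}+y_k^{(n)}$, one sees that $\Delta_n^k$ dominates the $\|\cdot\|_1$-deficit of $y^{(n)},y_k^{(n)}$; hence $\Delta_n^k\to 0$ together with LUR of $\|\cdot\|_1$ gives $\|y_k^{(n)}-y^{(n)}\|_1\to 0$ for each fixed $n$. These functions carry the information of $F$.

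The main obstacle is to pass from the convergence of these $Y$-components of the near-projections to $\|x_k-x\|\to 0$. The difficulty is genuine: the residuals $x-y^{(n)}$ and $x_k-y_k^{(n)}$ both have size $\approx d(x,Y)$, so the $Y$-part of $x_k-x$ is masked by this ambiguity and is not detected by the naive penalty $\|x-y\|^2$. Resolving this is the technical heart of the theorem, and it is where quotient convergence must be fed back in: one refines the penalty so that it is measured relative to the quotient (equivalently, one runs the MOTV decomposition, slicing $X$ by $Q^*G$-functionals on pieces where $Q$ has small oscillation and lifting the $Y$-decomposition there), so that $\|Qx_k-Qx\|_2\to 0$ forces the hidden $Y$-component of $x_k-x$ to converge as well. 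The same refinement is what makes the subspace functions $\sigma(X,E)$-lsc --- the delicate point, since an infimum of lsc functions need not be lsc --- with the $F$-part coming from $\|\cdot\|_1$ and the $Q^*G$-part from the quotient-adapted penalty. Assembling $\phi_0$ with these functions (and, if needed for equivalence, the original norm, which in the intended $C(X)$ application is itself $\tau_p$-lsc) yields an equivalent $\sigma(X,E)$-lsc LUR norm with $E=F+Q^*G$.
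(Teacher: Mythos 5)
First, for calibration: the paper does not prove Theorem \ref{3space} at all --- it is quoted from \cite{gtwz} and {\cite[Chapter 5]{motv}}, and the only thing the paper says about its proof is that both known arguments rely on the Bartle--Graves continuous selection for the quotient mapping. That remark points at exactly the place where your proposal is incomplete.

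Your skeleton (the $\sum_n 2^{-n}\phi_n$ convexity scheme, $\phi_0=\|Q\cdot\|_2^2$ for the quotient direction, inf-convolutions $\phi_n(x)=\inf_{y\in Y}\bigl(n\|x-y\|^2+\|y\|_1^2\bigr)$ for the subspace direction) is reasonable, and your accounting of what each deficit yields is accurate as far as it goes: $\Delta_0^k\to 0$ gives $\|Qx_k-Qx\|_2\to 0$, and $\Delta_n^k\to 0$ gives $\|y_k^{(n)}-y^{(n)}\|_1\to 0$ together with a parallelogram deficit of the residuals $x-y^{(n)}$, $x_k-y_k^{(n)}$ measured in the \emph{original} norm of $X$. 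That last deficit carries no information, because the original norm is not assumed LUR; so at the end of your second paragraph you control $Qx_k-Qx$ and $y_k^{(n)}-y^{(n)}$ but nothing that controls the residuals, hence nothing that controls $\|x_k-x\|$. You name this obstacle yourself and then dispose of it with ``one refines the penalty so that it is measured relative to the quotient'' and ``one runs the MOTV decomposition''; that is a gesture, not an argument, and it is precisely the content of the theorem. The missing idea is the Bartle--Graves selector: a norm-continuous, bounded, positively homogeneous (nonlinear) map $B:X/Y\to X$ with $Q\circ B=\mathrm{id}$. With $B$ one writes $x=B(Qx)+\bigl(x-B(Qx)\bigr)$ with $x-B(Qx)\in Y$; then $\|Qx_k-Qx\|\to 0$ upgrades to $\|B(Qx_k)-B(Qx)\|\to 0$ by continuity of $B$, and the LUR norm of $Y$ is applied to the canonical components $x-B(Qx)$, $x_k-B(Qx_k)$ rather than to penalty minimizers whose residuals are ambiguous up to $d(x,Y)$. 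No penalty of the form $\inf_{y\in Y}(\cdots)$ built only from the original norm and $\|\cdot\|_1$ can substitute for this, which is exactly why both cited proofs invoke the selection. The same gap reappears in your lower-semicontinuity claim for the subspace summands (an infimum of lsc functions need not be lsc, and $B$ is only norm-continuous, not $\sigma(X,E)$-continuous), which you likewise defer to the unspecified ``quotient-adapted penalty''; in the cited proofs this is handled by tracking lower semicontinuity through convex envelopes rather than through $B$ directly.
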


Both the original proof, and the one given in \cite{motv} using a nonlinear transfer technique, rely on the Bartle-Graves selection for the quotient mapping.

Another classical result that we will need is the following: 

\begin{proposition}[see, e.g., {\cite[Chapter 5]{motv}}] \label{c_0}
	Let $\{Y_\gamma, \|\cdot\|_\gamma\}_{\gamma \in \Gamma}$ be a family of Banach spaces and let $Y$ be the space
	\begin{equation*}
		Y := \bigoplus_{c_0(\Gamma)} Y_\gamma.
	\end{equation*}
	If $\|\cdot\|_\gamma$ is LUR for all $\gamma \in \Gamma$, then $Y$ admits an equivalent LUR norm.
	
	In addition, if $\|\cdot\|_\gamma$ are $F_\gamma$-lsc for some $F_\gamma \subset Y_\gamma^*$, then the LUR norm on $Y$ can be chosen $\sigma(Y, E)$-lower semicontinuous with $E = \bigoplus_{l_1(\Gamma)} F_\gamma$.
\end{proposition}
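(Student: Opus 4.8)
The plan is to build the norm on $Y$ as a countable sum of squares of seminorms and to verify local uniform rotundity through the standard defect criterion: a norm $|||\cdot|||$ is LUR exactly when $2|||y|||^2 + 2|||y^{(j)}|||^2 - |||y + y^{(j)}|||^2 \to 0$ forces $|||y - y^{(j)}||| \to 0$. If $|||\cdot|||^2 = \sum_k \lambda_k \psi_k^2$ with each $\psi_k$ a seminorm and $\lambda_k > 0$, then convexity makes every summand contribute a nonnegative amount to the global defect, so vanishing of the global defect forces $2\psi_k(y)^2 + 2\psi_k(y^{(j)})^2 - \psi_k(y + y^{(j)})^2 \to 0$ for each $k$ separately. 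The whole argument then reduces to choosing the $\psi_k$ so that these coordinatewise defects carry enough information to recover the $c_0$-sum norm.

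\emph{Construction.} Write $z(y) = (\|y_\gamma\|_\gamma)_{\gamma \in \Gamma} \in c_0(\Gamma)$ and let $\|\cdot\|_D$ denote Day's norm on $c_0(\Gamma)$, which is LUR and equivalent to the supremum norm. The first, ``outer'', piece is $\|y\|_D := \|z(y)\|_D$; this is a norm on $Y$ equivalent to $\|y\| = \sup_\gamma \|y_\gamma\|_\gamma$ and it detects the sizes of the coordinates. The second, ``inner'', piece must see the directions inside each $Y_\gamma$. When $\Gamma$ is countable, say $\Gamma = \N$, I take $\sum_{n} 2^{-n}\|y_n\|_n^2$ and set $|||y|||^2 = \|y\|_D^2 + \sum_n 2^{-n}\|y_n\|_n^2$. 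Both pieces are squares of (semi)norms and the second is dominated by $\|y\|^2$, so $|||\cdot|||$ is an equivalent norm. For general $\Gamma$ the inner piece cannot be written as a single series with fixed positive weights; instead one uses that every element of a $c_0$-sum has countable support, so that when testing LUR at $y$ against $\{y^{(j)}\}$ all the data live in the countable set $\Gamma_0 = \mathrm{supp}(y) \cup \bigcup_j \mathrm{supp}(y^{(j)})$, and the construction above is carried out on this sub-sum.

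\emph{Verification of LUR.} Assume the global defect tends to $0$. From the outer piece, monotonicity of Day's norm in the nonnegative coordinates gives $\|y + y^{(j)}\|_D \le \|z(y) + z(y^{(j)})\|_D$, whence
\[ 0 \le 2\|z(y)\|_D^2 + 2\|z(y^{(j)})\|_D^2 - \|z(y) + z(y^{(j)})\|_D^2 \le 2\|y\|_D^2 + 2\|y^{(j)}\|_D^2 - \|y + y^{(j)}\|_D^2 \to 0. \]
LUR of Day's norm then yields $\|z(y) - z(y^{(j)})\|_D \to 0$, that is $\sup_\gamma \bigl|\,\|y_\gamma\|_\gamma - \|y^{(j)}_\gamma\|_\gamma\,\bigr| \to 0$: the coordinate norms converge uniformly. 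From each inner summand the defect of $\|\cdot\|_\gamma^2$ tends to $0$, so LUR of $\|\cdot\|_\gamma$ gives $\|y_\gamma - y^{(j)}_\gamma\|_\gamma \to 0$ for every fixed $\gamma$. Finally I combine the two: given $\varepsilon > 0$, choose a finite $F \subset \Gamma$ with $\|y_\gamma\|_\gamma < \varepsilon$ off $F$; uniform convergence of the coordinate norms makes $\|y^{(j)}_\gamma\|_\gamma$ small off $F$ for large $j$, controlling the tail, while the finitely many coordinates in $F$ are handled by the per-coordinate convergence. This upgrades the pointwise convergence to $\sup_\gamma \|y_\gamma - y^{(j)}_\gamma\|_\gamma = \|y - y^{(j)}\| \to 0$.

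\emph{Lower semicontinuity and the main obstacle.} For the addendum, with $E = \bigoplus_{l_1(\Gamma)} F_\gamma \subset Y^* = \bigoplus_{l_1(\Gamma)} Y_\gamma^*$, each $y \mapsto \|y_\gamma\|_\gamma$ is $\sigma(Y,E)$-lsc because $\|\cdot\|_\gamma$ is $F_\gamma$-lsc and the coordinate projection pulls $F_\gamma$ back into $E$; Day's norm and the squares are suprema and sums of such functions, and these operations preserve $\sigma(Y,E)$-lower semicontinuity, so $|||\cdot|||^2$, and hence $|||\cdot|||$, is $\sigma(Y,E)$-lsc. The step I expect to be the real obstacle is the globalization of the inner piece over an uncountable $\Gamma$: a naive series needs an enumeration, while a symmetric rank-weighted supremum collapses to a function of the coordinate norms alone and is therefore blind to the inner directions. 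The clean way around it is the countable-support reduction together with the uniform-upgrade in the last step, and this is where I would concentrate the care.
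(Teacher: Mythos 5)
Your argument is complete and correct when $\Gamma$ is countable: the two-piece norm $|||y|||^2 = \|z(y)\|_D^2 + \sum_n 2^{-n}\|y_n\|_n^2$, the convexity decomposition of the defect, and the final tail-plus-finite-set upgrade to uniform convergence are all sound. The genuine gap is in the uncountable case, which is the case actually needed in the paper (the index set is $Y$, and the set of nontrivial fibers is typically uncountable). Your proposed fix --- carry out the countable construction on $\Gamma_0 = \mathrm{supp}(y)\cup\bigcup_j \mathrm{supp}(y^{(j)})$ --- does not produce a norm: an equivalent norm on $Y$ must be defined before any test point $y$ and test sequence $\{y^{(j)}\}$ are given, and a series $\sum_n 2^{-n}\|y_{\gamma_n}\|^2$ over an enumeration of a support that depends on $y$ is neither well defined nor convex as a function on $Y$. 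The countable-support reduction is legitimate for \emph{verifying} LUR of an already-defined norm, but it cannot be used to \emph{define} the inner piece.

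The standard construction (the one in the cited reference) closes exactly this gap, and it is the construction you dismissed: the generalized Day norm $\|y\|_D^2 = \sup\bigl\{\sum_{k=1}^n 4^{-k}\|y_{\gamma_k}\|_{\gamma_k}^2 : \gamma_1,\dots,\gamma_n \text{ distinct}\bigr\}$. Your objection that a symmetric rank-weighted supremum ``collapses to a function of the coordinate norms alone and is therefore blind to the inner directions'' is mistaken: while $\|y\|_D$ is indeed a function of $(\|y_\gamma\|_\gamma)_\gamma$, the middle term $\|y+y^{(j)}\|_D$ of the LUR defect is a function of $(\|y_\gamma + y^{(j)}_\gamma\|_\gamma)_\gamma$, which is \emph{not} determined by the separate coordinate norms. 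The Rainwater-type lemma for Day's norm shows that vanishing of the defect forces, for each index $\gamma$ in the support of $y$, the individual defect $2\|y_\gamma\|_\gamma^2 + 2\|y^{(j)}_\gamma\|_\gamma^2 - \|y_\gamma + y^{(j)}_\gamma\|_\gamma^2 \to 0$, at which point LUR of $\|\cdot\|_\gamma$ takes over; the tail is controlled exactly as in your last step. So the single Day-type norm simultaneously plays the role of both of your pieces and works for arbitrary $\Gamma$; your lower-semicontinuity argument then goes through unchanged for it, since it is a supremum of finite sums of $\sigma(Y,E)$-lsc functions.
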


Let us briefly discuss the lower-semicontinuity of the norm. Historically, the interest in properties of this type arizes to obtain dual renormings. An equivalent norm in a dual space is a dual norm provided that its unit ball is $w^*$-closed. In $C(X)$ spaces it is natural to consider this property for the pointwise topology. Let us mention that there is no known example of a compact space $X$ for which $C(X)$ is LUR renormable, but not with a $\tau_p$-lsc norm.

In order to work with the lower semicontinuity property, we will need the following well-known assertion:

\begin{proposition}\label{cor-lsc}
	If $\|\cdot\|$ is $\sigma(E, F)$-lower semicontinuous for some $F \subset E^*$ and $G$ is a norm-dense subspace of $F$, then $\|\cdot\|$ is $\sigma(E, G)$-lower semicontinuous.
\end{proposition}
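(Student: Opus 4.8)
The plan is to argue directly from the net characterization of lower semicontinuity: a norm is $\sigma(E,H)$-lsc (for a subspace $H\subset E^*$) precisely when $\|x\|\le\liminf_\alpha\|x_\alpha\|$ for every net $(x_\alpha)$ converging to $x$ in $\sigma(E,H)$. So I would take an arbitrary net $x_\alpha\to x$ in $\sigma(E,G)$ and try to establish $\|x\|\le\liminf_\alpha\|x_\alpha\|$. The first reduction is to a bounded net: setting $L:=\liminf_\alpha\|x_\alpha\|$, there is nothing to prove if $L=\infty$, so I assume $L<\infty$ and pass to a subnet along which $\|x_\alpha\|\to L$. This subnet still converges to $x$ in $\sigma(E,G)$ and is eventually bounded, say $\|x_\alpha\|\le M$ on a tail.

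The heart of the proof is upgrading $\sigma(E,G)$-convergence to $\sigma(E,F)$-convergence on this bounded subnet. Fix $\phi\in F$ and $\varepsilon>0$, and choose $g\in G$ with $\|\phi-g\|<\varepsilon$, which is possible since $G$ is norm-dense in $F$. Splitting
\[
	|\phi(x_\alpha)-\phi(x)|\le|(\phi-g)(x_\alpha)|+|g(x_\alpha)-g(x)|+|(g-\phi)(x)|,
\]
the outer terms are bounded by $\varepsilon M$ and $\varepsilon\|x\|$ respectively using the uniform bound, while the middle term tends to $0$ because $g\in G$. Hence $\limsup_\alpha|\phi(x_\alpha)-\phi(x)|\le\varepsilon(M+\|x\|)$, and letting $\varepsilon\to0$ gives $\phi(x_\alpha)\to\phi(x)$. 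As $\phi\in F$ was arbitrary, the subnet converges to $x$ in $\sigma(E,F)$. Invoking the assumed $\sigma(E,F)$-lower semicontinuity along this subnet then yields $\|x\|\le\liminf_\alpha\|x_\alpha\|=L$, as required.

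I expect the only genuine subtlety to be the uniform-boundedness step: without first restricting to a bounded subnet, the estimate $|(\phi-g)(x_\alpha)|\le\|\phi-g\|\,\|x_\alpha\|$ cannot be controlled, and the norm-density of $G$ in $F$ alone would not suffice to transfer convergence. Everything else is routine bookkeeping with nets. An equivalent and equally short alternative, which I might prefer for its geometric transparency, is to recall that a norm is $\tau$-lsc iff every ball $\{x:\|x\|\le c\}$ is $\tau$-closed, and then to separate any point outside such a ball from it by a functional in $F$ (using $\sigma(E,F)$-closedness and Hahn--Banach), approximate that functional by one in $G$, and apply the same norm estimate to obtain a separating functional in $G$; this shows the ball is $\sigma(E,G)$-closed.
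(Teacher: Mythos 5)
The paper states Proposition \ref{cor-lsc} as a well-known assertion and gives no proof, so there is nothing to compare against; your argument stands on its own and it is correct. The net-based proof is sound: the reduction to a subnet along which $\|x_\alpha\|\to L<\infty$ is legitimate (the $\liminf$ of a real-valued net is a cluster point, hence the limit of a subnet), and you correctly identify the one genuine point of the proof, namely that the eventual bound $\|x_\alpha\|\le M$ is what lets the density of $G$ in $F$ upgrade $\sigma(E,G)$-convergence to $\sigma(E,F)$-convergence via the three-term split. The only cosmetic remark is that in the paper's setting $\|\cdot\|$ is an equivalent norm, so the estimate $|(\phi-g)(x_\alpha)|\le\|\phi-g\|\,\|x_\alpha\|$ picks up an equivalence constant; this changes nothing. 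Your alternative sketch via $\sigma(E,F)$-closed balls, Hahn--Banach separation, and perturbation of the separating functional (using that the ball is norm-bounded) is equally valid and is probably the argument the author had in mind when calling the statement well known.
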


\section{Proof of the main theorem}\label{sec_main}

We start this section with a general assertion concerning the space $Y^y$, introduced in Section \ref{sec_fc}. It states that for a continuous map between topological spaces $X$ and $Y$, $Y^y$ preserves the induced topology of the nontrivial fiber, with the only topological requirement being closedness of points in $Y$. More precisely we have the following
\begin{proposition}\label{hom-fibers}
	Let $\pi$ be a continuous mapping from a topological space $X$ onto a $T_1$ topological space $Y$ and let $y \in Y$. Then the restriction $\left.p^y \right|_{\pi^{-1}(y)} : \pi^{-1}(y) \to (\pi^y)^{-1}(y)$ is a homeomorphism.
\end{proposition}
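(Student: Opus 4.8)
The plan is to exhibit $q := \left.p^y\right|_{\pi^{-1}(y)}$ as a continuous open bijection onto $(\pi^y)^{-1}(y)$; since a continuous open bijection is a homeomorphism, this suffices. First I would pin down the target. A class $[x] \in Y^y$ lies in $(\pi^y)^{-1}(y)$ exactly when $\pi^y([x]) = \pi(x) = y$, i.e. when $x \in \pi^{-1}(y)$, in which case the defining relation makes $[x] = \{x\}$ a singleton. Consequently $(\pi^y)^{-1}(y) = p^y\!\left(\pi^{-1}(y)\right)$, and distinct points of $\pi^{-1}(y)$ have distinct (singleton) images, so $q$ is a bijection onto $(\pi^y)^{-1}(y)$. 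Continuity of $q$ is immediate, being a restriction of the quotient map $p^y$, and both fibers carry the subspace topologies inherited from $X$ and $Y^y$ respectively.

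The substance of the argument is openness of $q$. I would use the standard fact that the quotient map $p^y$ sends saturated open sets of $X$ to open sets of $Y^y$, where a set is saturated if it is a union of equivalence classes. Given a basic relatively open set $W = U \cap \pi^{-1}(y)$ in the fiber, with $U$ open in $X$, I would form
\begin{equation*}
	A := U \cup \left(X \setminus \pi^{-1}(y)\right).
\end{equation*}
Here is precisely where the $T_1$ hypothesis enters: since $\{y\}$ is closed in $Y$, the fiber $\pi^{-1}(y)$ is closed in $X$, so $X \setminus \pi^{-1}(y)$ is open and hence $A$ is open. Moreover $A$ contains every nontrivial class $\pi^{-1}(y')$ with $y' \neq y$ in its entirety, while every class over $y$ is a singleton, so $A$ is saturated. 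Therefore $p^y(A)$ is open in $Y^y$.

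It then remains to verify $p^y(A) \cap (\pi^y)^{-1}(y) = q(W)$, which identifies $q(W)$ as relatively open. For the inclusion from left to right, a class in the intersection is a singleton $\{x\}$ with $x \in \pi^{-1}(y)$ that equals $p^y(a)$ for some $a \in A$; singleton-ness forces $a = x$, so $x \in A \cap \pi^{-1}(y) = W$. The reverse inclusion is direct. Thus $q$ carries relatively open sets to relatively open sets, completing the proof. I expect the only genuinely delicate point to be the bookkeeping around saturation, namely checking that $A$ meets each nontrivial fiber in the whole fiber and each trivial class in at most that point, together with the clean identity for the intersection above; the role of $T_1$ is confined to guaranteeing that $\pi^{-1}(y)$ is closed, which is what makes $A$ open.
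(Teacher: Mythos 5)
Your proof is correct and follows essentially the same route as the paper: both arguments establish that the restriction is a continuous open bijection, and both prove openness by enlarging the relatively open subset of the fiber to the saturated open set $W \cup \left(X \setminus \pi^{-1}(y)\right)$ (your $A$ coincides with the paper's $X \setminus F$), using the $T_1$ hypothesis only to ensure $\pi^{-1}(y)$ is closed and the quotient topology to conclude the image is open. The bookkeeping with saturation and the intersection identity matches the paper's computation of $U \cap (\pi^y)^{-1}(y) = p^y(V)$.
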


\begin{proof}
	The map $\left.p^y \right|_{\pi^{-1}(y)} : \pi^{-1}(y) \to (\pi^y)^{-1}(y)$ is bijective by definition and is continuous as the restriction of a continuous map. We shall show that it is open.
	
	Let $V$ be a relatively open set in $\pi^{-1}(y)$. Then the set $F:= \pi^{-1}(y) \setminus V$ is relatively closed in $\pi^{-1}(y)$, which is in turn closed as the continuous preimage of a point in $Y$. $F$ is thus closed in $X$. The set $X \setminus F = \left(V \cap \pi^{-1}(y)\right) \cup \pi^{-1}\left(Y \setminus \{y\}\right)$ is therefore open in $X$.
	
	On the other hand, $X \setminus F = (p^y)^{-1}\left(p^y(V) \cup (\pi^y)^{-1}\left(Y \setminus \{y\}\right)\right)$. As $p^y$ is a quotient map, it follows that $U:= p^y(V) \cup (\pi^y)^{-1}\left(Y \setminus \{y\}\right)$ must be open in $Y^y$. As $U \cap (\pi^y)^{-1}(y) = p^y(V)$, we have that $p^y(V)$ is relatively open in $(\pi^y)^{-1}(y)$.
\end{proof}

This simple assertion allows us not to differentiate between a fiber in $X$ and the nontrivial fiber in the corresponding space $Y^y$. We use this fact in the following key lemma.

\begin{lemma}\label{keylemma}
	Let $X$ and $Y$ be Hausdorff compacta and $\pi$ a fully closed mapping from $X$ onto $Y$. Let $f \in C(\pi^{-1}(y))$. Then there exists a continuous extension $\tilde{f}$ of $f$ on the whole space $X$, such that $\osc_{\tilde{f}}\left(\pi^{-1}(y')\right) = 0$ for any $y' \neq y$.
\end{lemma}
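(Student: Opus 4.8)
My plan is to pass to the quotient space $Y^y$ and observe that the functions we are after are precisely those that factor through the quotient map $p^y$. A continuous $\tilde f$ on $X$ satisfies $\osc_{\tilde f}\left(\pi^{-1}(y')\right) = 0$ for every $y' \neq y$ exactly when $\tilde f$ is constant on each equivalence class of $Y^y$, namely on the singletons sitting inside $\pi^{-1}(y)$ and on the entire fibers $\pi^{-1}(y')$ for $y' \neq y$. By the universal property of the quotient topology, such $\tilde f$ are in bijective correspondence with the continuous functions $g \in C(Y^y)$ through $\tilde f = g \circ p^y$, with $\tilde f$ continuous if and only if $g$ is. Hence it suffices to produce some $g \in C(Y^y)$ whose restriction to the nontrivial fiber realises $f$, and then pull it back.

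The crux of the argument is to check that $Y^y$ is a normal space, so that Tietze's extension theorem is available. Since $p^y$ is continuous and $X$ is compact, $Y^y = p^y(X)$ is compact. By Proposition \ref{eqDef}(\ref{def7}) the full closedness of $\pi$ guarantees that $Y^y$ is regular, and together with compactness this gives that $Y^y$ is Hausdorff (as remarked after Proposition \ref{eqDef}); a compact Hausdorff space is normal. This is the step where full closedness is genuinely used: for an arbitrary continuous surjection the quotient $Y^y$ need not be Hausdorff, and in that case the desired extension may fail to exist. I expect this to be the main (indeed essentially the only) obstacle, the remainder being bookkeeping.

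It then remains to set up the boundary data and invoke Tietze. By Proposition \ref{hom-fibers} the restriction $h := \left.p^y\right|_{\pi^{-1}(y)}$ is a homeomorphism of $\pi^{-1}(y)$ onto $(\pi^y)^{-1}(y)$, which, being the continuous image of the compact set $\pi^{-1}(y)$, is compact and hence closed in the Hausdorff space $Y^y$. I transport $f$ to this closed set by $g_0 := f \circ h^{-1} \in C\left((\pi^y)^{-1}(y)\right)$ and apply Tietze's theorem on the normal space $Y^y$ to obtain $g \in C(Y^y)$ with $\left.g\right|_{(\pi^y)^{-1}(y)} = g_0$. Finally $\tilde f := g \circ p^y$ is continuous on $X$, is constant on each fiber $\pi^{-1}(y')$ with $y' \neq y$, so that $\osc_{\tilde f}\left(\pi^{-1}(y')\right) = 0$ there, and on $\pi^{-1}(y)$ it restricts to $g \circ h = g_0 \circ h = f$, which is the required extension.
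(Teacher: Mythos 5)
Your proof is correct and follows essentially the same route as the paper: transport $f$ to the nontrivial fiber of $Y^y$ via the homeomorphism of Proposition \ref{hom-fibers}, extend by Tietze on $Y^y$, and pull back along $p^y$. You make explicit the point the paper leaves implicit --- that full closedness (via Proposition \ref{eqDef}) is exactly what makes $Y^y$ compact Hausdorff, hence normal, so that Tietze applies --- which is a welcome clarification rather than a deviation.
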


\begin{proof}
	Let $g = f \circ (p^y)^{-1}$ be a real valued funciton on $(\pi^y)^{-1}(y)$. By Proposition \ref{hom-fibers} $g \in C\left((\pi^y)^{-1}(y)\right)$. As $(\pi^y)^{-1}(y)$ is closed in the Hausdorff compact space $Y^y$, we can construct a continuous extension $\tilde{g}$ of $g$.
	
	We can then define $\tilde{f}(x) := \tilde{g}\left(p^y (x)\right)$, which is the desired extension.
\end{proof}

\begin{remark}
	As the construction is based on the Tietze extension theorem, the function $\tilde{f}$ can be chosen such that $\|\tilde{f}\| \leq \|f\|$.
\end{remark}

\begin{corollary}\label{key_cor}
	Let $\pi: X \to Y$ be a fully closed mapping between Hausdorff compacta. Let $f_1, \dots, f_n$ be continuous functions defined on some distinct fibers $\pi^{-1}(y_1), \dots \pi^{-1}(y_n)$. Then there exists a function $\tilde{f} \in C(X)$, such that $\left.\tilde{f}\right\vert_{\pi^{-1}(y_i)} = f_i$ for $i = 1, \dots, n$ and $\osc_{\tilde{f}}\left(\pi^{-1}(y')\right) = 0$ for any $y' \notin \{y_1, \dots y_n\}$
\end{corollary}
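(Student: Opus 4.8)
The plan is to reduce the statement to $n$ applications of a slightly strengthened version of Lemma \ref{keylemma}, and then to add the resulting functions. The obstacle to simply summing the extensions produced by Lemma \ref{keylemma} is that such an extension, while having zero oscillation on every other fiber, need not \emph{vanish} on the fibers $\pi^{-1}(y_j)$, $j \neq i$; it is merely constant there, and these uncontrolled constants would corrupt the values of the sum on the remaining prescribed fibers. The key observation is that this constant can be forced to be zero.

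First I would revisit the construction in Lemma \ref{keylemma}, fixing an index $i$ and working in the quotient space $Y^{y_i}$. As in that proof, Proposition \ref{hom-fibers} identifies $\pi^{-1}(y_i)$ with the nontrivial fiber $(\pi^{y_i})^{-1}(y_i)$ via the homeomorphism $\left.p^{y_i}\right|_{\pi^{-1}(y_i)}$, so $f_i$ transports to a continuous function $g_i$ on $(\pi^{y_i})^{-1}(y_i)$. Since $y_j \in Y \setminus \{y_i\}$ for $j \neq i$, each fiber $\pi^{-1}(y_j)$ is collapsed by $p^{y_i}$ to a single point $q_j := p^{y_i}\left(\pi^{-1}(y_j)\right)$ of $Y^{y_i}$, and these points are distinct from one another and lie outside $(\pi^{y_i})^{-1}(y_i)$.

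The main step is to extend $g_i$ so that it vanishes at the $q_j$. Here full closedness enters: by Proposition \ref{eqDef} (together with compactness, which upgrades regularity to the Hausdorff property as noted after that proposition) the space $Y^{y_i}$ is compact Hausdorff, hence normal, and being $T_1$ its finite subset $\{q_j : j \neq i\}$ is closed and disjoint from the closed set $(\pi^{y_i})^{-1}(y_i)$. Defining a continuous function on the union of these two disjoint closed sets equal to $g_i$ on the fiber and to $0$ at each $q_j$, the Tietze extension theorem yields $\tilde{g}_i \in C(Y^{y_i})$ extending it. Setting $\tilde{f}_i := \tilde{g}_i \circ p^{y_i}$ gives a function in $C(X)$ that restricts to $f_i$ on $\pi^{-1}(y_i)$, vanishes identically on $\pi^{-1}(y_j)$ for $j \neq i$, and, being constant on every fiber over $Y \setminus \{y_i\}$, satisfies $\osc_{\tilde{f}_i}\left(\pi^{-1}(y')\right) = 0$ for all $y' \neq y_i$.

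Finally I would put $\tilde{f} := \sum_{i=1}^n \tilde{f}_i$. On $\pi^{-1}(y_k)$ every summand with $i \neq k$ vanishes while $\tilde{f}_k$ restricts to $f_k$, so $\left.\tilde{f}\right\vert_{\pi^{-1}(y_k)} = f_k$; and for $y' \notin \{y_1, \dots, y_n\}$ each $\tilde{f}_i$ is constant on $\pi^{-1}(y')$, whence so is $\tilde{f}$, giving $\osc_{\tilde{f}}\left(\pi^{-1}(y')\right) = 0$. The only real content beyond Lemma \ref{keylemma} is the Hausdorffness of $Y^{y_i}$, which is exactly what lets Tietze impose the vanishing condition at the collapsed points; I expect this to be the crux of the argument, with everything else reducing to bookkeeping.
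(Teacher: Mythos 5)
Your proof is correct, and it takes a genuinely different route from the paper's. The paper applies Lemma \ref{keylemma} only locally, on disjoint closed neighborhoods $\pi^{-1}(C_1),\dots,\pi^{-1}(C_n)$ of the prescribed fibers; it then pushes the resulting fiberwise-constant functions down to $C_i \setminus \{y_i\}$, extends them jointly by Tietze over $Y \setminus \{y_1,\dots,y_n\}$, lifts back along $\pi$, and defines $\tilde{f}$ by cases. You instead strengthen the single-fiber lemma itself: working in $Y^{y_i}$ you prescribe the value $0$ at the finitely many points to which the other prescribed fibers collapse --- legitimate because $Y^{y_i}$ is compact Hausdorff by Proposition \ref{eqDef} (hence normal), and a finite subset of a $T_1$ space is closed and disjoint from the closed fiber $(\pi^{y_i})^{-1}(y_i)$ --- and then simply sum the $n$ resulting extensions. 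Your version has the mild advantage that Tietze is only ever invoked in a compact Hausdorff space, whereas the paper's gluing step appeals to normality of the non-compact space $Y \setminus \{y_1,\dots,y_n\}$. One caveat worth flagging: the paper later uses this corollary in Lemma \ref{T_onto} together with the bound $\|\tilde{f}\| \leq \max_i \|f_i\|$ (needed there to make a series converge), which the paper's piecewise construction delivers but your sum does not --- a priori it only gives $\|\tilde{f}\| \leq \sum_i \|f_i\|$. This does not affect the corollary as stated, and is easily repaired by truncating your $\tilde{f}$ at the level $\max_i \|f_i\|$, which preserves continuity, the prescribed restrictions, and constancy on the remaining fibers.
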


\begin{proof}
	We can use the previous lemma to construct appropriate extensions of the functions $\tilde{f}_1, \dots, \tilde{f}_n$ locally on $\pi^{-1}(C_1), \dots, \pi^{-1}(C_n)$, where $C_1, \dots, C_n$ are closed disjoint neighborhoods of the respective points $y_1, \dots, y_n$.
	
	Then we can define the functions $g_i$ on $C_i \setminus \{y_i\}$ for $i = 1, \dots, n$ as follows: $g_i(y) = \tilde{f}_i(x)$, where $x$ is any point of the respective fiber $\pi^{-1}(y)$. By construction these functions are well defined and continuous on $\bigcup_{i = 1}^n \left(C_i \setminus \{y_i\}\right)$. As the latter is a closed subspace of the normal space $Y \setminus \left(\bigcup_{i = 1}^n y_i\right)$, there exists a continuous extension $\tilde{g}$.
	
	We define the desired extension $\tilde{f}$ as follows:
	
	\begin{equation*}
		\tilde{f}(x) \quad = \quad \left\{\begin{aligned} \quad
			&f_1(x), \qquad &x \in \pi^{-1} \left(y_1\right);\\
			&\vdots & \\
			&f_n(x), \qquad &x \in \pi^{-1} \left(y_n\right);\\
			&\tilde{g}\left(\pi(x)\right), \qquad &x \in \pi^{-1} \left(Y \setminus \bigcup_{i = 1}^n \{y_i\}\right).
		\end{aligned}\right.\qedhere
	\end{equation*}
\end{proof}

We can now proceed with the proof of Theorem \ref{main_thm}. Let us fix an arbitrary point in each fiber $\pi^{-1}(y)$ and call it $0^y$. By $H^0_y$ we will denote the subspace of $C\left(\pi^{-1}(y)\right)$ consisting of the functions satisfying $f(0_y) = 0$. We construct the $c_0$-sum of these spaces:
\begin{equation*}
	H := \bigoplus_{c_0\left(Y\right)} H^0_y.
\end{equation*}

For $f \in C(X)$, $y \in Y$ and $x \in \pi^{-1}(y)$ we define:
\begin{equation*}
	f_y(x) := \left.\left(f(x) - f(0_y)\right)\right\vert_{\pi^{-1}(y)}.
\end{equation*}

Then $f_y$ is an element of $H^0_y$ for all $y \in Y$. The main purpose of this translation is to obtain $\|f_y\| \leq \osc\limits_{\pi^{-1}(y)} f$. Then by Proposition \ref{c_0.osc} we have that $\{f_y\}_{y \in Y}$ is an element of $H$. We can therefore define the following linear map from $C(X)$ to $H$:
\begin{align*}
	T: \; C(X) &\to H\\
	f &\mapsto \{f_y\}_{y \in Y}.
\end{align*}

Proposition \ref{c_0} tells us that the space $H$ is LUR renormable. The following lemma will allow us to transfer this property to $C(X)$.

\begin{lemma} \label{T_onto}
	$T$ is onto.
\end{lemma}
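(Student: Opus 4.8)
The plan is to show that every element $\{g_y\}_{y \in Y} \in H$ lies in the range of $T$ by constructing a preimage $f \in C(X)$ as the uniform sum of a series built with Corollary \ref{key_cor}. Since $\{g_y\} \in H = \bigoplus_{c_0(Y)} H^0_y$, for every $\delta > 0$ the set $\{y : \|g_y\| > \delta\}$ is finite. Setting $\delta_n = 2^{-n}$ and $A_n = \{y \in Y : \|g_y\| > \delta_n\}$, the sets $A_n$ are finite and increasing, and I would partition their union into the finite layers $B_1 = A_1$ and $B_n = A_n \setminus A_{n-1}$ for $n \ge 2$. For $y \in B_n$ with $n \ge 2$ one has $\|g_y\| \le \delta_{n-1} = 2^{-(n-1)}$, while for $y \in B_1$ one has $\|g_y\| \le M := \sup_{y} \|g_y\| < \infty$.

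First I would apply Corollary \ref{key_cor} at each level: since $B_n$ is a finite set of distinct points, there is $h_n \in C(X)$ with $\left.h_n\right\vert_{\pi^{-1}(y)} = g_y$ for every $y \in B_n$ and $\osc_{\pi^{-1}(y')} h_n = 0$ for every $y' \notin B_n$. Because the extensions in Corollary \ref{key_cor} are built via the Tietze theorem (cf.\ the Remark following Lemma \ref{keylemma}), they can be taken to satisfy $\|h_n\|_\infty \le \max_{y \in B_n}\|g_y\|$; hence $\|h_1\|_\infty \le M$ and $\|h_n\|_\infty \le 2^{-(n-1)}$ for $n \ge 2$. Consequently $\sum_n \|h_n\|_\infty < \infty$, so the series $\sum_n h_n$ converges uniformly to a function $f \in C(X)$.

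Then I would verify $T(f) = \{g_y\}$. The map $f \mapsto f_y$ from $C(X)$ to $H^0_y$ is linear and bounded (indeed $\|f_y\| \le \osc_{\pi^{-1}(y)} f$), so $f_y = \sum_n (h_n)_y$ in $H^0_y$ for each $y$. If $y' \notin B_n$, then $h_n$ is constant on $\pi^{-1}(y')$, hence $(h_n)_{y'} = 0$; and if $y \in B_m$, then $(h_m)_y(x) = g_y(x) - g_y(0_y) = g_y(x)$ since $g_y(0_y) = 0$. Every $y$ with $g_y \neq 0$ belongs to exactly one layer $B_m$, so all terms of the series vanish except the $m$-th, giving $f_y = g_y$; and for $y$ with $g_y = 0$ every term vanishes, so $f_y = 0 = g_y$. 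Thus $T(f) = \{g_y\}$ and $T$ is onto.

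The main obstacle is the uniform-norm control $\|h_n\|_\infty \le \max_{y \in B_n}\|g_y\|$ on the extensions produced by Corollary \ref{key_cor}, which is exactly what guarantees summability and hence continuity of the limit $f$. This reduces to checking that each Tietze extension used in building Corollary \ref{key_cor} can be chosen without increasing the sup-norm, so that the glued function inherits the bound $\max_i \|f_i\|_\infty$; everything else is a routine interchange of the sum with the continuous operation $f \mapsto f_y$.
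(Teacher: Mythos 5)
Your proof is correct and follows essentially the same route as the paper: the layers $B_n = \{y : 2^{-n} < \|g_y\| \le 2^{1-n}\}$ coincide with the paper's sets $K_n$, the extensions come from Corollary \ref{key_cor} with the same sup-norm control, and the preimage is the uniformly convergent sum of the layer extensions. Your explicit verification that $T(\sum_n h_n) = \{g_y\}$ and your remark that the norm bound $\|h_n\|_\infty \le \max_{y\in B_n}\|g_y\|$ must be propagated through the Tietze constructions in Corollary \ref{key_cor} simply spell out steps the paper leaves implicit.
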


\begin{proof}
	Let $h$ be an element of the unit ball of $H$. For $n \in \mathbb{N}$ we define the following sets:
	\begin{equation*}
		K_n := \left\{y \in Y : \;2^{-n} < \|h_y\| \leq 2^{1-n}\right\}.
	\end{equation*}
	Each $K_n$ is finite by definition of $H$. Thus we can enumerate its elements:
	\begin{equation*}
		K_n = \{y_{i,n}\}_{i = 1}^{i_n}.
	\end{equation*}
	We define the following functions on $\pi^{-1}(K_n)$:
	\begin{equation*}
		f_n(x) \quad = \quad \left\{\begin{aligned} \quad
			&h_{y_{1,n}}(x), \qquad &x \in \pi^{-1} \left(y_{1,n}\right);\\
			&\vdots& \\
			&h_{y_{i_n,n}}(x), \qquad &x \in \pi^{-1} \left(y_{i_n,n}\right).\\
		\end{aligned}\right.
	\end{equation*}
	
	By Corollary \ref{key_cor} we can construct continuous extensions $\tilde{f}_n$ of $f_n$ such that:
	\begin{itemize}
		\item $\osc_{\tilde{f}_n}\left(\pi^{-1}(y')\right) = 0$ for all $y' \in Y \setminus K_n$,
		\item $\left\|\tilde{f}_n\right\| \leq \left\|f_n\right\| \leq 2^{1-n}$.
	\end{itemize}
	
	We can now define $f = \sum_{n = 1}^{\infty} \tilde{f_n}$. Then $f \in C(X)$ and $Tf = h$.
\end{proof}

As a continuous surjection between Banach spaces $T$ is open by the open mapping principle. $H$ is thus isomorphic to the quotient space $X/\Ker(T)$. $\Ker(T)$ is in turn isomorphic to $C(Y)$. Thus Theorem \ref{3space} applies and we conclude the existence of an equivalent LUR norm on $C(X)$.

In addition, this norm can be chosen $\sigma(C(X), E)$-lower semicontinuous with
\begin{align*}
	&E = F + T^*G;\\
	&F = \Span \left\{\delta_x: x \in X\right\};\\
	&G = \bigoplus_{l_1(Y)} \left(\Span \left\{\delta_x: x \in \pi^{-1}(y)\right\}\right).
\end{align*}
As the pointwise topology on $C(X)$ is generated by $F$, which is norm dense in $E$, Proposition \ref{cor-lsc} gives us the desired conclusion. \qedsymbol

\section{Questions and remarks} \label{sec_concl}

Let us start this section with the example promised in the introduction. We will construct a Hausdorff compact space $K$ and a fully closed map $\pi: K \to L$ onto some other Hausdorff compact $L$, such that $C(K)$ and $C(L)$ are both LUR renormable, but the property fails for some fiber $\pi^{-1}(l)$.

\begin{example}
	Let $X$ be some Hausdorff compact for which $C(X)$ is not LUR renormable. We can find a set $\Gamma$ and embed $X$ in the cube $[0,1]^{\Gamma}$ equipped with its natural product topology. We will denote this cube $K$ and we will identify $X$ with its embedding. That is $X \subset K$.
	
	We will construct a continuous function $\pi : K \to [0,1]^{\left(K \setminus X\right)^2}$.
	
	For any pair $(x,y) \in \left(K \setminus X\right)^2$ we can find a continuous function $f_{x,y}: K \to [0,1]$, such that $\left.f_{x,y}\right\vert_{X} = 0$, $f_{x,y}(x) = 1$ and $f_{x,y}(y) = \frac{1}{2}$. Omit the last condition on the diagonal, that is $f_{x,x}(x) = 1$. We can now define a map 
	\begin{align*}
		\pi: \; K &\to [0,1]^{\left(K \setminus X\right)^2}\\
		k &\mapsto \{f_{x, y}(k)\}_{(x, y) \in \left(K \setminus X\right)^2}.
	\end{align*}
	
	This map is continuous as the coordinate functions are all continuous. Define $L$ to be the image of $K$ under $\pi$. We have now constructed our map $\pi: K \to L$.
	
	We then have $\pi^{-1}\left(\{0\}^{\left(K \setminus X\right)^2}\right) = X$ and if $x$ and $y$ are distinct points in $K \setminus X$, then $\pi(x) \neq \pi(y)$. Thus the only nontrivial fiber is $\pi^{-1}\left(\{0\}^{\left(K \setminus X\right)^2}\right)$, which makes $\pi$ fully closed.
\end{example}

Let us now go back to the question of totally ordered compacta, discussed in Section \ref{sec_norm}. We obtain as an immediate corollary of Theorem \ref{main_thm} the result that $C\left((K \times L)_{lex}\right)$ is LUR renormable, whenever $C(K)$ and $C(L)$ are. In particular, the result is true about the lexicographic cube $[0,1]^n$ for $n$ natural. In \cite{hjnr00} Haydon, Jayne, Namioka and Rogers proved that $C\left([0,1]^{\alpha}\right)$ admits an equivalent LUR norm if and only if $\alpha$ is a countable ordinal.

It is easy to see that for a limit ordinal $\alpha$ the lexicographic order topology on $[0,1]^{\alpha}$ coincides with that of the limit of the inverse system $\{[0,1]^\beta, \pi^{\gamma}_\beta, \alpha\}$, where $\pi^{\gamma}_\beta$ is the natural projection onto the first $\beta$ coordinates for $\beta \leq \gamma < \alpha$. The lexicographic cube $[0,1]^{\alpha}$ is thus a Fedorchuk compact of spectral height $\alpha$ (see \cite{watson1992construction}). The natural question that arises is the following:

\begin{question} \label{fed_countable}
	Let $X$ be a Fedorchuk compact of countable spectral height. Does $C(X)$ admit an equivalent pointwise-lowersemicontinuous LUR norm?
\end{question} 

We will now consider an application of our main theorem. What follows is a well-known result that can be easily obtained using Theorem \ref{3space} (see e.g. {\cite[Corollary VII.4.6]{dgz}}). Here we get it directly from Theorem \ref{main_thm}.

\begin{corollary} \label{derived}
	Let $K$ be a Hausdorff compact such that $C(K')$ admits an equivalent $\tau_p$-lsc LUR norm. Then $C(K)$ also admits such a norm.
\end{corollary}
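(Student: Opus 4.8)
The goal is Corollary \ref{derived}: given a Hausdorff compact $K$ with $C(K')$ admitting a $\tau_p$-lsc LUR norm (where $K'$ denotes the Cantor--Bendixson derived set of $K$, i.e. the set of non-isolated points), deduce that $C(K)$ does too. The plan is to realize this as a direct instance of Theorem \ref{main_thm} by constructing an appropriate fully closed map $\pi: K \to Y$ onto a suitable compact $Y$, so that the hypotheses on $C(Y)$ and on the fibers $C(\pi^{-1}(y))$ are automatically satisfied.

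First I would define the target space $Y$ by collapsing the derived set to a point: let $Y = K / K'$, the quotient identifying all of $K'$ to a single point $\ast$, with $\pi: K \to Y$ the quotient map. Since $K'$ is closed in $K$, the quotient $Y$ is Hausdorff compact, and $Y$ is in fact the one-point compactification of the discrete set $K \setminus K'$ of isolated points of $K$; hence $Y$ is a scattered compact of height $\le 2$, indeed metrizable-or-not but with $C(Y)$ easily LUR renormable (it is $c_0$ of the isolated points plus a scalar, so Proposition \ref{c_0} applies). The fibers of $\pi$ are the singletons $\{x\}$ for isolated $x$ (trivial, $C(\{x\}) = \R$) together with the one nontrivial fiber $\pi^{-1}(\ast) = K'$, whose function space $C(K')$ is LUR renormable by hypothesis. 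So both fiber conditions of Theorem \ref{main_thm} hold.

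The key step, and the main obstacle, is verifying that $\pi$ is fully closed in the sense of Definition \ref{fc_def1}: for any two disjoint closed sets $F_1, F_2 \subset K$, the intersection $\pi(F_1) \cap \pi(F_2)$ must be finite. Any point of $\pi(F_1) \cap \pi(F_2)$ other than $\ast$ corresponds to an isolated point $x$ of $K$ lying in both $F_1$ and $F_2$, which is impossible since $F_1 \cap F_2 = \emptyset$. Therefore $\pi(F_1) \cap \pi(F_2) \subseteq \{\ast\}$, a set with at most one element. This is the crucial simplification: collapsing exactly the derived set forces all potential ``overlap'' in the images to concentrate at the single point $\ast$, so the intersection is trivially finite and full closedness is immediate. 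I would present this verification carefully, noting that the only fiber that can be shared by disjoint closed sets is the collapsed one.

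With $\pi$ shown to be fully closed and all the spaces $C(Y)$, $C(\pi^{-1}(y))$ carrying $\tau_p$-lsc LUR norms, Theorem \ref{main_thm} applies directly and yields an equivalent $\tau_p$-lsc LUR norm on $C(K)$, completing the proof. I would close by remarking that this recovers the classical fact (cf. \cite[Corollary VII.4.6]{dgz}) as a one-line consequence of the fully closed machinery, the point being that the derived-set collapse is the simplest nontrivial example of a fully closed map with a single non-degenerate fiber.
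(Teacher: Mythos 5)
Your proposal is correct and is essentially identical to the paper's own proof: the paper also collapses $K'$ to a single point, obtaining the one-point compactification $\alpha(K\setminus K')$ as the target, observes that full closedness is immediate because the only nontrivial fiber is $K'$, notes $C(\alpha(K\setminus K'))\cong c_0(K\setminus K')$, and invokes Theorem \ref{main_thm}. Your verification of full closedness is just a slightly more explicit spelling-out of the same observation.
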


\begin{proof}
	Define $\pi$ from $K$ onto the one-point compactification $\alpha \left(K \setminus K'\right)$ in the following way:
	\begin{equation*}
		\pi(x) \quad = \quad \left\{\begin{aligned} \quad
			&x, \qquad &x \in K \setminus K'\\
			&\infty \qquad &x \in K'.
		\end{aligned}\right.
	\end{equation*}
	This projection is continuous and it is fully closed as the only nontrivial fiber is $\pi^{-1}(\infty) = K'$. The space $C\left(\alpha \left(K \setminus K'\right)\right)$ is isomorphic to $c_0\left(K \setminus K'\right)$ and Theorem \ref{main_thm} applies.
\end{proof}

\begin{remark}
	As a direct consequence we obtain the result of Deville from \cite{dev_scattered} that $C(K)$ is LUR renormable whenever $K$ is a scattered compact with $K^{(\omega)} = \emptyset$. In particular, we obtain LUR renormability of $C(K)$ for $K$ the Ciesielski-Pol compact (see {\cite[Examples VI.8.8]{dgz}}). The latter is of particular significance in renorming theory as $C(K)$ is LUR renormable, but there is no linear bounded one-to-one operator from $C(K)$ into $c_0(\Gamma)$ for any set $\Gamma$.
\end{remark}

Now let $K$ be a scattered compact with $K^{(\gamma)} = \emptyset$ for some ordinal $\gamma$. For $\beta < \gamma$ we consider the spaces $X_\beta := \alpha\left(K \setminus K^{(\beta)}\right)$ and the following projections:
\begin{align*}
	\pi^{\beta + 1}_{\beta}: X_{\beta + 1} &\to X_{\beta}\\
	\pi^{\beta + 1}_{\beta}(x) \quad = \quad &\left\{\begin{aligned} \quad
		&x, \qquad &x \in X_{\beta}\\
		&\infty \qquad &x \in X_{\beta + 1} \setminus X_{\beta}.
	\end{aligned}\right.
\end{align*}
As above, these maps are continuous and fully closed for all $\beta < \gamma$. Moreover, for each of the projections the only nontrivial fiber is homeomorphic to the one-point compactification $\alpha\left(K^{(\beta)} \setminus K^{(\beta + 1)}\right)$ of a discrete set. We can now define the inverse system $S = \left\{X_\beta, \pi^{\alpha}_ \beta, \alpha, \beta \in \gamma\right\}$. It can be shown that $S$ is a continuous inverse system and its limit $\lim\limits_{\leftarrow} S \cong K$.

In \cite{hay-rog_scattered} Haydon and Rogers showed that $C(K)$ is LUR renormable whenever $K^{(\omega_1)} = \emptyset$ (see also {\cite[Corollary VII.4.5]{dgz}}). In this case $K$ can be reconstructed as above as the limit of a countable inverse system. This example gives rise to a more general version of Question \ref{fed_countable}, namely:

\begin{question}
	Let $S = \{X_\beta, \pi^{\alpha}_{\beta}, \gamma\}$ be a continuous inverse system, where $\gamma$ is a countable ordinal, $X_0$ is a point, the neighboring bonding mappings $\pi^{\beta + 1}_{\beta}$ are fully closed, and for any $\beta + 1 < \gamma$ and any $y \in X_{\beta}$ the space $C\left((\pi^{\beta + 1}_{\beta})^{-1}(y)\right)$ is LUR renormable. Let $X$ be the limit $\lim\limits_{\leftarrow}S$. Does $C(X)$ admit an equivalent LUR norm?
\end{question}

\section*{Acknowledgments}
The author would like to thank Stanimir Troyanski for posing the problem and both him and Nadezhda Ribarska for the valuable discussions.
\printbibliography

\end{document}